  \theoremstyle{plain}
    \newtheorem{thm}{Theorem}[section]
    \newtheorem{prop}[thm]{Proposition}
    \newtheorem{corollary}[thm]{Corollary}
    \newtheorem{subsec}[thm]{}
\theoremstyle{definition}
    \newtheorem{defn}[thm]{Definition}
    \newtheorem{exam}[thm]{Example}
\theoremstyle{remark}
     \newtheorem{remark}[thm]{Remark}
\title{}
\author{}
\date{}
\begin{document}
\title{Multiplicative Nambu structures on Lie groupoids}



\author{Apurba Das}
\email{apurbadas348@gmail.com}
\address{Stat-Math Unit,
Indian Statistical Institute, Kolkata 700108,
West Bengal, India.}



\subjclass[2010]{17B62, 17B63, 53D17.}
\keywords{coisotropic submanifold, Nambu-Poisson bracket, Poisson groupoid, Lie algebroid, Lie bialgebroid.}

\thispagestyle{empty}

\begin{abstract}
We study some properties of coisotropic submanifolds of a manifold with respect to a given multivector field. Using this notion, we  generalize the results of
Weinstein \cite{wein} from Poisson bivector field to Nambu-Poisson tensor or more generally to any multivector field. We also introduce the notion of Nambu-Lie groupoid generalizing the concepts of both Poisson-Lie groupoid and Nambu-Lie group. We show that the  infinitesimal version 
of Nambu-Lie groupoid is the notion of weak Lie-Filippov bialgebroid as introduced in \cite{bas-bas-das-muk}. Next we introduce coisotropic subgroupoids of a
Nambu-Lie groupoid and these subgroupoids corresponds to, so called coisotropic subalgebroids of the corresponding weak Lie-Filippov bialgebroid.

\end{abstract}
\maketitle


\vspace{0.5cm}
\section{Introduction}

In \cite{wein}, Weinstein introduced the notion of coisotropic submanifold of a Poisson manifold generalizing the notion of Lagrangian submanifold
of symplectic manifold. A submanifold $C$ of a Poisson manifold $(P, \pi)$ is called coisotropic, if $\pi^{\sharp}(TC)^0 \subset TC,$ or, equivalently
$\pi (\alpha, \beta) = 0$, for all $ \alpha, \beta \in (TC)^0$, where $(TC)^0$ is the conormal bundle of $C$. Moreover, Weinstein proved the following results.
\begin{enumerate}
\item A map $\phi : P_1 \rightarrow P_2$ between Poisson manifolds is a Poisson map if and only if its graph is a coisotropic submanifold of
$P_1 \times P_2^{-},$ where $P_2^{-}$ stands for the manifold $P_2$ with opposite Poisson structure.
\item If $\phi : P \rightarrow Q$ is a surjective submersion from a Poisson manifold $P$ to some manifold $Q$, then $Q$
has a Poisson structure for which $\phi$ is a Poisson map if and only if
$$\{(x,y)| \phi(x) = \phi(y)\} \subset P \times P$$ is a coisotropic submanifold of $P \times P^{-}.$
\end{enumerate}

To define the coisotropic submanifold of a Poisson manifold, one does not require  the Poisson tensor to be closed, that is, $[\pi, \pi] = 0,$
where $[~,~]$ denotes the Schouten bracket on multivector fields. Therefore, the notion of  coisotropic submanifolds make sense for any bivector field, or more generally, for any multivector field.
Explicitly, if $M$ is a smooth manifold and $\Pi \in \mathcal{X}^n(M) = \Gamma \bigwedge^n TM$ be an $n$-vector field on $M$, then a submanifold $C \hookrightarrow M$ is called coisotropic with respect to $\Pi$
if 
$$\Pi^{\sharp} ({\bigwedge}^{n-1}(TC)^0) \subset TC ~~\Leftrightarrow ~~\Pi (\alpha_1, \ldots, \alpha_n) = 0,~~ \mbox{for all}~~ \alpha_1, \ldots, \alpha_n \in (TC)^0,$$
where $\Pi^\sharp : \bigwedge^{n-1}T^*M \rightarrow TM$ is the bundle map induced by $\Pi$.

Nambu-Poisson manifolds are generalization of Poisson manifolds. Recall that a Nambu-Poisson manifold of order $n$ is a manifold $M$ equipped with an $n$-vector field $\Pi$ such that the induced bracket on functions satisfies the Fundamental identity (Definition \ref{nambu-poisson}). The $n$-vector field $\Pi$ of a Nambu-Poisson manifold is referred to as the associated Nambu tensor. Coisotropic submanifolds of a Nambu-Poisson manifold $M$ are those submanifolds which are coisotropic with respect to the  Nambu tensor $\Pi$. 
 
In the present paper, we study some basic properties of coisotropic submanifolds of a manifold with respect to a given multivector field
and generalize the results of Weinstein to the case of multivector field. More precisely, we prove the following results (Propositions \ref{nambu map-coiso} and \ref{coinduced-coiso}).
\begin{enumerate}
\item Let $(M, \Pi_M)$ and $(N, \Pi_N)$ be two manifolds with $n$-vector fields and $\phi: M \rightarrow N$ be a smooth map. Then $\phi_* \Pi_M = \Pi_N$ if and only if its graph
$$\text{Gr}(\phi) := \{(m, \phi(m))| m \in M\}$$
is a coisotropic submanifold of $M \times N$ with resoect to  $\Pi_M \oplus (-1)^{n-1} \Pi_N$.
\item Let $(M, \Pi_M)$ be a  manifold with an $n$-vector field and $\phi: M \rightarrow N$ be a surjective submersion. Then $N$ has an (unique) $n$-vector field $\Pi_N$ such that $\phi_* \Pi_M = \Pi_N$ if and only if 
$$R(\phi) := \{(x,y) \in M \times M|\phi(x)=\phi(y)\}$$
is a coisotropic submanifold of $M \times M$ with respect to $\Pi_M \oplus (-1)^{n-1}{\Pi}_M$.
\end{enumerate}

Poisson Lie group is a Lie group equipped with a Poisson structure such that the group multiplication map is a Poisson map. Equivalently, a Lie group equipped with a Poisson structure is a Poisson Lie group if the Poisson bivector field is multiplicative \cite{lu-wein}. These definitions have no natural extension when one wants to define Poisson groupoid. Nevertheless, the notion of coisotropic submanifolds of Poisson manifolds was used by Weinstein \cite{wein} to introduce the notion of Poisson groupoid. Recall that a Poisson groupoid is a Lie groupoid $G \rightrightarrows M$ with a Poisson structure on $G$ such that the graph of the groupoid (partial) multiplication map is a coisotropic submanifold of $G \times G \times G^{-}.$ 

In \cite{xu}, P. Xu gave an equivalent formulation of Poisson groupoid which generalizes the multiplicativity condition for Poisson Lie group. More generally, In \cite{pont-geng-xu}, the authors introduced the notion of multiplicative multivector fields on a Lie groupoid. Given a Lie groupoid $G \rightrightarrows M$, an $n$-vector field $\Pi \in \mathcal{X}^{n}(G)$ is called multiplicative, if the graph of the groupoid multiplication
is a coisotropic submanifold of $G \times G \times G$ with respect to $\Pi \oplus \Pi \oplus (-1)^{n-1} \Pi$. In this terminology, a Poisson groupoid is a Lie groupoid equipped with a
multiplicative Poisson tensor. 

In the present paper, we extend this approach to the case of Lie groupoid with a Nambu structure. We introduce the notion of a Nambu-Lie groupoid as a 
Lie groupoid with a Nambu structure $\Pi$ such that the Nambu tensor $\Pi$ is multiplicative (Definition \ref{nambu-lie groupoid}).
When $G$ is a Lie group, this definition coincides with the definition of Nambu-Lie group given by Vaisman \cite{vais}. Using results proved in \cite{pont-geng-xu} for mutiplicative multivector fields on Lie groupoid, we deduce the following facts which are parallel to the case of Poisson groupoid. 
Suppose $(G \rightrightarrows M, \Pi)$ is a Nambu-Lie groupoid, then 
\begin{enumerate}
\item $M \hookrightarrow G$ is a coisotropic submanifold of $G$;
\item the groupoid inversion map $i: G \rightarrow G$ is an anti Nambu-Poisson map; 
\item there is a unique Nambu-Poisson structure $\Pi_M$ on $M$ for which the source map is a Nambu-Poisson map (Proposition \ref{inverse-basenambu}).
\end{enumerate}

It is well known that for a Nambu-Poisson manifold $M$ of order $n$, the space of $1$-forms admits a
skew-symmetric $n$-bracket which satisfies the Fundamental identity modulo some restriction (\cite{vais , gra-mar , bas-bas-das-muk}). Moreover, the bracket on forms and the de-Rham differential of the manifold satisfy a compatibility condition similar to that of a Lie bialgebroid. This motivates the authors \cite{bas-bas-das-muk} to introduce a notion of weak Lie-Filippov bialgebroid of order $n$. If $M$ is a Nambu-Poisson manifold of order $n$, then $(TM, T^*M)$ provides such an example. Roughly speaking, a weak Lie-Filippov bialgebroid of order $n$ ($n > 2$) over $M$ is a Lie algebroid $A \rightarrow M$ together with a skew-symmetric $n$-ary bracket
on the space of sections of the dual bundle $A^* \rightarrow M$ and a bundle map $\rho : \bigwedge^{n-1}A^* \rightarrow TM$ satisfying some conditions (cf. Definition \ref{lie-fill-defn}). Moreover it is proved in \cite{bas-bas-das-muk} that, if $(A, A^*)$ is a weak Lie-Filippov bialgebroid of order $n$ over $M$, then
there is an induced Nambu-Poisson structure of order $n$ on the base manifold $M$.

In the present paper, we prove that weak Lie-Filippov bialgebroids are infinitesimal form of Nambu-Lie groupoids. 
Explicitly, if $C$ is a coisotropic submanifold of a Nambu-Poisson manifold $(M, \Pi)$, then we show that the $n$-ary bracket on the space of $1$-forms
on $M$ restricts to the sections of the conormal bundle $(TC)^0 \rightarrow C$ and the induced bundle map $\Pi^{\sharp} : \bigwedge^{n-1}T^*M \rightarrow TM$ 
maps $\bigwedge^{n-1}(TC)^0$ to $TC$ (Proposition \ref{coiso-n-bracket}). Therefore, if $G \rightrightarrows M$ is a Nambu-Lie groupoid of order $n$ whose Lie algebroid is 
$AG \rightarrow M,$ then as $M$ is a coisotropic submanifold of $G$, the space of sections of the dual bundle $A^*G \cong (TM)^0 \rightarrow M$ is equipped with a skew-symmetric $n$-ary bracket. 
Moreover, there is a bundle map $\bigwedge^{n-1} A^*G \rightarrow TM$ so that the pair $(AG, A^\ast G),$ with the above data, satisfies the conditions of a weak Lie-Filippov bialgebroid. Thus we prove the following (cf. Theorem \ref{nambu-grpd-bialgbd}). 
\begin{thm}
 Let $(G \rightrightarrows M , \Pi) $ be a Nambu-Lie groupoid of order $n$ with Lie algebroid $AG \rightarrow M$. Then $(AG, A^*G)$ forms a weak Lie-Filippov bialgebroid of order $n$ over $M$.
\end{thm}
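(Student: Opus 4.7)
The plan is to transport, via the canonical identification $A^*G \cong (TM)^0$, the structures that Proposition \ref{coiso-n-bracket} produces on sections of the conormal bundle of a coisotropic submanifold. Since $(G\rightrightarrows M,\Pi)$ is a Nambu-Lie groupoid, the base $M$ is a coisotropic submanifold of $G$ (item (1) of the list of properties stated just before Theorem~\ref{nambu-grpd-bialgbd}). Applying Proposition \ref{coiso-n-bracket} to $C=M \subset G$ with respect to $\Pi$, the $n$-bracket $\{\,,\ldots,\}$ on $\Omega^1(G)$ induced by $\Pi$ restricts to an $n$-ary skew bracket on $\Gamma((TM)^0)$, and the bundle map $\Pi^\sharp$ restricts to a bundle map $\bigwedge^{n-1}(TM)^0 \to TM$. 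After invoking the standard isomorphism $A^*G \cong (TM)^0$, this yields the two required pieces of data on $(AG, A^*G)$: the $n$-bracket on $\Gamma(A^*G)$ and the anchor $\rho:\bigwedge^{n-1} A^*G \to TM$. The Lie algebroid structure on $AG$ is of course the canonical one associated to $G\rightrightarrows M$.

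Having collected the data, I would check the axioms of Definition~\ref{lie-fill-defn} one by one. Skew-symmetry and the Leibniz rule in each argument (with respect to functions on $M$) descend directly from the corresponding properties of the bracket on $\Omega^1(G)$ once we know the restriction is well-defined, since $C^\infty(M)$-linear combinations of sections of $(TM)^0$ can be lifted to $C^\infty(G)$-linear combinations of $1$-forms annihilating $TM$ along $M$. The ``Fundamental Identity modulo the relevant restriction'' axiom for the $n$-bracket on $\Gamma(A^*G)$ is inherited from the Fundamental Identity satisfied by $\Pi$ on $G$ (i.e.\ by the Nambu-Poisson structure on $G$), again via the restriction mechanism supplied by Proposition~\ref{coiso-n-bracket}.

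The main obstacle is the compatibility axiom of Definition~\ref{lie-fill-defn} linking the Lie algebroid structure of $AG$ to the $n$-bracket and anchor on $A^*G$; this is the genuine ``bialgebroid'' content. Here the multiplicativity of $\Pi$ must enter in an essential way, just as multiplicativity of a Poisson bivector encodes the Lie bialgebroid compatibility in Xu's formulation. Concretely, the multiplicativity of $\Pi$ means the graph of the groupoid multiplication is coisotropic in $G\times G \times G$ with respect to $\Pi\oplus\Pi\oplus(-1)^{n-1}\Pi$, and I would apply the coisotropic-graph machinery (the Nambu analogue of item~(1) above Proposition~\ref{nambu map-coiso}) to this submanifold, together with the results of \cite{pont-geng-xu} on multiplicative multivector fields, to obtain the desired compatibility identity after restricting everything along $M$. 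This identity, rewritten in terms of sections of $A^*G$ and of $AG$, is exactly what the definition of weak Lie-Filippov bialgebroid demands.

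Finally I would record two small consistency checks that the proof naturally produces: that the induced Nambu-Poisson structure $\Pi_M$ on $M$ coming from the bialgebroid (as in \cite{bas-bas-das-muk}) agrees with the one provided by item~(3) above (so the source map is Nambu-Poisson in both senses), and that in the case $n=2$ the whole construction recovers the Lie bialgebroid of a Poisson groupoid, confirming that the definition behaves as intended. The hardest step is unquestionably the multiplicativity-based compatibility, and if one wants a fully self-contained argument it should be carried out in local coordinates adapted to the groupoid structure near $M$, using a local frame of $AG$ and its dual.
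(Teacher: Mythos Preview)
Your overall strategy matches the paper's: identify $A^*G$ with the conormal bundle $(TM)^0$ of the coisotropic submanifold $M\subset G$, and pull the required structures down from the weak Lie-Filippov bialgebroid $(TG,T^*G)$ that $G$ carries as a Nambu-Poisson manifold. Where you diverge is in the treatment of the compatibility axiom, and relatedly in how the $d_A$-closedness hypothesis in conditions (2) and (3) of Definition~\ref{lie-fill-defn} is handled.

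The paper's key device, which you do not mention, is to use \emph{left-invariant} extensions $\tilde\alpha\in\Omega^1(G)$ of sections $\alpha\in\Gamma A^*G$. For such extensions one has the simple but decisive identity $d_A\alpha = (d\tilde\alpha)\big|_M$. This immediately does two things. First, if $d_A\alpha_i=0$ one may choose $\tilde\alpha_i$ with $d\tilde\alpha_i=0$, so the restricted Fundamental Identity and the anchor identity (conditions (2) and (3)) are literally the restrictions to $M$ of the corresponding identities in $(TG,T^*G)$; your proposal is vague about how the $d_A$-closed hypothesis transfers, and this is exactly the mechanism. Second, the compatibility condition $d_A[\alpha_1,\ldots,\alpha_n]=\sum_k[\alpha_1,\ldots,d_A\alpha_k,\ldots,\alpha_n]$ is obtained by restricting the analogous identity for $d$ and the Nambu-form bracket on $\Omega^1(G)$, which holds because $(TG,T^*G)$ is already a weak Lie-Filippov bialgebroid. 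So what you flag as ``the main obstacle'' is in fact the easiest step once left-invariant extensions are in play; no direct appeal to the coisotropic graph of the multiplication or to local coordinates is needed. Multiplicativity enters only to ensure that $M$ is coisotropic (so that Proposition~\ref{coiso-n-bracket} applies) and, implicitly, that the left-invariant calculus behaves well.
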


Finally, we compare the Nambu-Poisson structures on the base manifold $M$ induced from the Nambu-Lie groupoid $G \rightrightarrows M$ and the weak Lie-Filippov bialgebroid $(AG, A^*G)$ (cf. Proposition \ref{compare-two-NP-structure}).

Next, we introduce the notion of a coisotropic subgroupoid $H \rightrightarrows N$ of a Nambu-Lie groupoid $(G \rightrightarrows M, \Pi).$
To study the infinitesimal form of a coisotropic subgroupoid, we introduce the notion of coisotropic subalgebroid of a weak Lie-Filippov bialgebroid.
Then we show that the Lie algebroid of a coisotropic subgroupoid $H \rightrightarrows N$ is a coisotropic subalgebroid of the corresponding weak Lie-Filippov bialgebroid $(AG, A^*G)$ (cf. Proposition \ref{coiso-subgrpd-coiso-subalgbd}). 

{\bf Organization.} The paper is organized as follows. In section 2, we recall basic definitions and conventions. In section 3, we study some properties of coisotropic
submanifolds of a manifold with respect to a given multivector field and in section 4 we introduce Nambu-Lie groupoids and study some of its basic properties.
In section 5, we show that the infinitesimal object corresponding to Nambu-Lie groupoid is weak Lie-Filippov bialgebroid and in section 6 we introduce coisotropic
subgroupoids of a Nambu-Lie groupoid and study their infinitesimal.\\
{\bf Acknowledgements.} The author would like to thank his Ph.D supervisor Professor Goutam Mukherjee for his guidance and carefully reading the manuscript.
\section{Preliminaries}
In this section, we recall some basic preliminaries from \cite{duf-zung , mac-book , xu} and fix the notations that will be used throughout the paper .

Nambu-Poisson manifolds are $n$-ary generalizations of Poisson manifolds introduced by Takhtajan \cite{takhtajan}.
\begin{defn}\label{nambu-poisson}
Let $M$ be a smooth manifold. A {\it Nambu-Poisson structure} of order $n$ on $M$ is a skew-symmetric $n$-multilinear bracket
\begin{align*}
 \{~, \ldots, ~\} : C^\infty (M) \times \stackrel{(n)}{\ldots} \times C^\infty (M) \rightarrow C^\infty (M)
\end{align*}
satisfying the following conditions:
\begin{enumerate}
 \item {\it Leibniz rule}:\\
  $\{f_1, \ldots, f_{n-1}, fg\} = f \{f_1, \ldots, f_{n-1}, g \} + \{f_1, \ldots, f_{n-1}, f \} g; $
 \item {\it Fundamental identity}:\\
 $\{f_1, \ldots ,f_{n-1}, \{g_1,\ldots, g_n\}\} = \sum_{k=1}^n  \{g_1,\ldots,g_{k-1},\{f_1,\ldots,f_{n-1},g_k\},\ldots, g_n\}; $
\end{enumerate}
for all $f_i, g_j, f, g \in C^\infty (M).$ A manifold together with a Nambu-Poisson structure of order $n$ is called a {\it Nambu-Poisson manifold of order $n$}. Thus the space of smooth functions with this bracket forms a Nambu-Poisson algebra. A Nambu-Poisson manifold of order $2$ is nothing but a Poisson manifold \cite{vais-book}. Since the bracket above is skew-symmetric and satisfies Leibniz rule, there exists an $n$-vector field $\Pi \in \mathcal{X}^{n}(M)$, such that
$$ \{f_1, \ldots, f_n\} = \Pi (df_1, \ldots, df_n),$$
for all $ f_1, \ldots, f_n \in C^\infty (M)$. Given any $(n-1)$ functions $ f_1, \ldots, f_{n-1} \in C^\infty (M)$, the {\it Hamiltonian vector field}
associated to these functions is denoted by $X_{f_1\ldots f_{n-1}}$ and is defined by
$$ X_{f_1\ldots f_{n-1}} (g) = \{f_1, \ldots ,f_{n-1}, g\}.$$
Note that the Fundamental identity, in terms of Hamiltonian vector fields, is equivalent to the condition
$$ [X_{f_1\ldots f_{n-1}}, X_{g_1\ldots g_{n-1}}] = \sum_{k=1}^{n-1} X_{g_1\ldots \{f_1, \ldots, f_{n-1},g_k\}\ldots g_{n-1}}.$$
for all $f_1, \ldots, f_{n-1}, g_1, \ldots, g_{n-1} \in C^\infty(M).$ The Fundamental identity can also be rephrased as
$$ \mathcal{L}_{X_{f_1\ldots f_{n-1}}} \Pi = 0,$$
for all $f_1, \ldots, f_{n-1} \in C^\infty(M),$  which shows that every Hamiltonian vector field preserves the Nambu tensor. A Nambu-Poisson manifold is often denoted by $(M, \{~, \ldots, ~\})$ or simply by $(M, \Pi).$
\end{defn}

\begin{exam}
\begin{enumerate}
\item Let $M$ be an orientable manifold of dimension $n$ and $\nu$ be a volume form on $M$. Define an $n$-bracket $\{~, \ldots, ~\}$ on
$C^\infty(M)$ by the following identity
$$ df_1 \wedge \cdots \wedge df_n = \{f_1, \ldots, f_n\} \nu.$$
Then $\{~, \ldots, ~\}$ defines a Nambu-Poisson structure of order $n$ on $M$. Let $\Pi_\nu \in \Gamma{\bigwedge^n TM}$ denotes the associated Nambu-Poisson
tensor. If $\Pi \in \Gamma{\bigwedge^n TM}$ is any Nambu-Poisson structure of order $n$  such that $\Pi \neq 0$ at every point, then there
exists a volume form $\nu'$ on $M$ such that $\Pi = \Pi_{\nu'}$. If $M = \mathbb{R}^n$ and $\nu = dx_1 \wedge \cdots \wedge dx_n$ is the standard volume form, then one recovers the Nambu structure on $\mathbb{R}^n$ originally discussed by Y. Nambu \cite{nambu}.
\item Let $\Pi$ be any $n$-vector field on an oriented manifold $M$ of dimension $n$. Then $\Pi$ defines a Nambu structure of order $n$ on $M$ (see \cite{ibanez1}).
\item Let $M$ be a manifold of dimension $m$ and $X_1, \ldots, X_n$ be linearly independent vector fields such that $[X_i, X_j] = 0$ for all
$i, j = 1, \ldots, n$. Then the $n$-vector field $\Pi = X_1 \wedge \cdots \wedge X_n$ defines a Nambu structure of order $n$.
\item Let $(M, \{~, \ldots, ~\})$ be a Nambu-Poisson manifold of order $n$. Suppose $k \leqslant n-2$ and $F_1, \ldots, F_k \in C^\infty(M)$ be any fixed functions on $M$. Define a $(n-k)$-bracket
$\{~, \ldots, ~\}'$ on $C^\infty(M)$ by
\begin{align*}
 \{f_1, \ldots, f_{n-k}\}' = \{F_1, \ldots, F_k, f_1, \ldots, f_{n-k}\}
\end{align*}
for $f_1, \ldots, f_{n-k} \in C^\infty(M)$. Then $\{~, \ldots, ~\}'$ defines a Nambu-structure of order $(n-k)$ on $M$. This Nambu structure
is called the {\it subordinate Nambu structure} of $(M, \{~, \ldots, ~\})$ with subordinate function $F_1, \ldots, F_k$.
 \item If $\Pi_i$ is a Nambu structure of order $n_i$ on a manifold $M_i$ $(i=1,2)$, then $\Pi = \Pi_1 \wedge \Pi_2$ is a Nambu structure of order $n_1 + n_2$ on $M_1 \times M_2$ \cite{duf-zung}.
\end{enumerate}
More examples of Nambu structures can be found in \cite{ibanez2, vais}.
\end{exam}


Let $(M, \Pi)$ be a Nambu-Poisson manifold of order $n$. For each $m \in M$, let $\mathcal{D}_mM \subset T_mM$ be the subspace of the tangent space at $m$ generated by all Hamiltonian vector fields at $m$. Since the Lie bracket of two Hamiltonians is again a Hamiltonian, therefore $\mathcal{D}$ defines a {\it (singular) integrable distribution} whose leaves are either $n$-dimensional submanifolds endowed with a volume form or just singletons \cite{duf-zung}.

\begin{defn}
Let $(M, \Pi_M)$ and $(N, \Pi_N)$ be two manifolds with $n$-vector fields. A smooth map $\phi: M \rightarrow N$ is called {\it $(\Pi_M, \Pi_N)$-map} if the induced brackets on functions satisfies:
\begin{align*}
 \{\phi^*f_1, \ldots, \phi^*f_n\}_M = \phi^* \{f_1, \ldots, f_n\}_N
\end{align*}
for all
$f_1, \ldots, f_n \in C^\infty(N),$ or equivalently, $\phi_* \Pi_M = \Pi_N.$ The map $\phi$ is called an {\it anti $(\Pi_M, \Pi_N)$-map} if 
\begin{align*}
 \{\phi^*f_1, \ldots, \phi^*f_n\}_M = (-1)^{n-1} \phi^* \{f_1, \ldots, f_n\}_N
\end{align*}
for all $f_1, \ldots, f_n \in C^\infty(N)$. A $(\Pi_M, \Pi_N)$-map $\phi : (M, \Pi_M) \longrightarrow (N, \Pi_N)$ between  Nambu-Poisson manifolds of the same order $n$ is called a {\it Nambu-Poisson map} or a $N$-$P$-map.
\end{defn}
\begin{remark}
The condition for a $(\Pi_M, \Pi_N)$-map can also be expressed in terms of the induced bundle maps as
\begin{align*}
\Pi^{\sharp}_{N, {\phi(m)}} = T_m \phi \circ \Pi^{\sharp}_{M, {m}} \circ T^*_m \phi ~ ~~\mbox{~~ for each}~~ m \in M,
\end{align*}
where $\Pi^{\sharp}_M :\bigwedge^{n-1}T^*M \rightarrow TM$ is the induced bundle map and is given by
\begin{align*}
 \langle \beta, \Pi^{\sharp}_M (\alpha_1 \wedge \cdots \wedge \alpha_{n-1}) \rangle = \Pi_M (\alpha_1, \ldots, \alpha_{n-1}, \beta)
\end{align*}
for all $\alpha_1, \ldots, \alpha_{n-1}, \beta \in T_x^* M,$ $x \in M$.
\end{remark}

\begin{defn}
 A {\it Lie groupoid} over a smooth manifold $M$ is a smooth manifold $G$ together with the following structure maps:
\begin{enumerate}
 \item two surjective submersions $\alpha, \beta : G \rightarrow M $, called the {\it source} map and the {\it target} map respectively;
 \item a smooth {\it partial multiplication} map
$$ G_{(2)} = \{(g,h) \in G \times G | \beta(g) = \alpha (h) \} \rightarrow G , ~~ (g,h)\mapsto gh ;$$
 \item a smooth {\it unit} map $\epsilon : M \rightarrow G$, $x \mapsto \epsilon_x ;$
 \item and a smooth {\it inverse} map $i : G \rightarrow G$, $g \mapsto g^{-1}$ with $\alpha (g^{-1}) = \beta (g)$ and $\beta (g^{-1}) = \alpha (g)$
\end{enumerate}
such that, the following conditions are satisfied\\
$\hspace*{1.5cm}$(i) $\alpha (gh) = \alpha (g)$ and $\beta(gh) = \beta (h)$;\\
$\hspace*{1.5cm}$(ii) $(gh)k = g(hk),$ whenever the multiplications make sense;\\
$\hspace*{1.5cm}$(iii) $\alpha(\epsilon_x) = \beta (\epsilon_x) = x$, $\forall x \in M$;\\
$\hspace*{1.5cm}$(iv) $\epsilon_{\alpha(g)} g = g$ and $ g \epsilon_{\beta(g)} = g$, $\forall g \in G$;\\
$\hspace*{1.5cm}$(v) $g g^{-1} = \epsilon_{\alpha(g)}$ and $g^{-1} g = \epsilon_{\beta(g)}$, $\forall g \in G$.
\end{defn}
A Lie groupoid $G$ over $M$ is denoted by $G \rightrightarrows M$ when all the structure maps are understood.

\begin{remark}
Note that the smooth structure on $G_{(2)}$ comes from the fact that 
$$G_{(2)} = (\beta \times \alpha)^{-1}(\Delta_M),$$ where $\beta \times \alpha : G \times G \rightarrow M \times M,~~(g,h) \mapsto (\beta(g), \alpha (h))$
and $ \Delta_M = \{(m,m)| m \in M\} \subset M \times M$ is the diagonal submanifold of $M \times M.$ Then these conditions imply that the inverse
map $i : G \rightarrow G,$ $g \mapsto g^{-1}$ is also smooth \cite{mac-book}. Moreover, $\alpha$-fibers and $\beta$-fibers are submanifolds of $G$ as both $\alpha$ and $\beta$ are surjective submersions.
\end{remark}

\begin{defn}
 Given a Lie groupoid $G \rightrightarrows M$, define an equivalence relation $'\sim'$ on $M$ by the following: two points $x, y \in M$ are said to be
equivalent, written as $x \sim y$, if there exists an element $g \in G$ such that $\alpha(g) = x$, $\beta(g) = y$. The quotient $M/\sim$ is called the {\it orbit set} of $G$.
\end{defn}

\begin{defn}
 Given two Lie groupoid $G_1 \rightrightarrows M_1$ and $G_2 \rightrightarrows M_2$, a {\it morphism} between Lie groupoids is a pair $(F, f)$ of smooth maps
$F : G_1 \rightarrow G_2$ and $f: M_1 \rightarrow M_2$ which commute with all the structure maps of $G_1$ and $G_2$. In other words,
$$ \alpha_2 \circ F = f \circ \alpha_1, ~ ~ \beta_2 \circ F = f \circ \beta_1, ~ ~ \text{and} ~ ~ F(g_1 h_1) = F(g_1)F(h_1)$$
for all $(g_1,h_1) \in (G_1)_{(2)}.$
\end{defn}

\begin{defn}
 Let $G \rightrightarrows M$ be a Lie groupoid. A {\it Lie subgroupoid} of it is a Lie groupoid $ H \rightrightarrows N$ together with injective immersions $i : H \rightarrow G$ and
$i_0 : N \rightarrow M$ such that $(i, i_0)$ is a Lie groupoid morphism.
\end{defn}

\begin{defn}
 Let $G \rightrightarrows M$ be a Lie groupoid. A submanifold $\mathcal{K}$ of $G$ is called a {\it bisection} of the Lie groupoid, if $\alpha|_{\mathcal{K}} : \mathcal{K} \rightarrow M$ and
$\beta|_{\mathcal{K}} : \mathcal{K} \rightarrow M$ are diffeomorphisms.
\end{defn}
 The existence of local bisections through any point $g \in G$ is always guaranted. The space of bisections $\mathcal{B}(G)$ form an infinite dimensional (Fr\'{e}chet) Lie group under the multiplication of subsets induced from the partial multiplication of $G$. Note that the left (right) multiplication is defined only on $\alpha$-fibers ($\beta$-fibers), therefore, we can not define a diffeomorphism of $G$ using left (right) multiplication by an element, like a Lie group. However we can do so by using bisection instead of an element.
Given a bisection $\mathcal{K} \in \mathcal{B}(G)$, let
$l_{\mathcal{K}}$ and $r_{\mathcal{K}}$ be the diffeomorphisms on $G$ defined by
$$ l_{\mathcal{K}} (h) = gh, \hspace{0.15cm} \text{where} \hspace{0.15cm} g \in \mathcal{K} \hspace{0.15cm} \text{is the unique element such that} \hspace{0.15cm} \beta(g) = \alpha(h)$$
and
$$ r_{\mathcal{K}} (h) = hg', \hspace{0.15cm} \text{where} \hspace{0.15cm} g' \in \mathcal{K} \hspace{0.15cm} \text{is the unique element such that} \hspace{0.15cm} \alpha(g') = \beta(h).$$

\begin{remark}
 Suppose $\mathcal{K}$ is any (local) bisection of $G$ through $g \in G$. Then the restriction of the map
$l_{\mathcal{K}}$ to $\alpha^{-1} (\beta(g))$ is the left translation $l_g$ by $g$:
$$ l_g : \alpha^{-1}(\beta(g)) \rightarrow \alpha^{-1}(\alpha(g)), ~ h \mapsto gh .$$
\end{remark}

Then we have the following result \cite{xu}.

\begin{prop}\label{left-invariant}
 Let $G \rightrightarrows M$ be a Lie groupoid and $P$ be an $n$-vector field on $G$. Suppose for any $g \in G$ with $\beta(g) = u$, $P$ satisfies $P(g) = (l_{\mathcal{G}})_* P(\epsilon_u)$, where $\mathcal{G} \in \mathcal{B}(G)$ is any arbitrary bisection through the point $g$. Then $P$ is left invariant.
\end{prop}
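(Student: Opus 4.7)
The plan is to prove $P$ is left invariant in the sense that $(l_{\mathcal{K}})_* P = P$ for every bisection $\mathcal{K}$ of $G$. Fix an arbitrary bisection $\mathcal{K}$ and a point $h \in G$, and let $k \in \mathcal{K}$ be the unique element with $\beta(k) = \alpha(h)$, so that $l_{\mathcal{K}}(h) = kh$. The identity to be established is then the pointwise statement
\[
(l_{\mathcal{K}})_* \bigl(P(h)\bigr) = P(kh).
\]

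First I would choose any (local) bisection $\mathcal{H}$ through $h$, whose existence is guaranteed by the general fact recalled in the paper, and apply the hypothesis at $h$ to write
\[
P(h) = (l_{\mathcal{H}})_* P(\epsilon_{\beta(h)}).
\]
Next I would form the product bisection $\mathcal{K}\mathcal{H}$ using the multiplication on $\mathcal{B}(G)$; since $\beta(k)=\alpha(h)$, the element $kh$ lies in $\mathcal{K}\mathcal{H}$, so $\mathcal{K}\mathcal{H}$ is a bisection through $kh$. A direct unfolding of the definitions shows that bisection multiplication is compatible with left translation, namely
\[
l_{\mathcal{K}\mathcal{H}} = l_{\mathcal{K}} \circ l_{\mathcal{H}},
\]
because for any $g \in G$ one has $l_{\mathcal{H}}(g) = h'g$ with the unique $h' \in \mathcal{H}$ characterised by $\beta(h')=\alpha(g)$, and then $l_{\mathcal{K}}(h'g) = k'h'g$ with the unique $k' \in \mathcal{K}$ characterised by $\beta(k') = \alpha(h')$, producing exactly $l_{\mathcal{K}\mathcal{H}}(g) = (k'h')g$.

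Finally, applying the hypothesis at the point $kh$ (noting $\beta(kh) = \beta(h)$) with the bisection $\mathcal{K}\mathcal{H}$ in place of $\mathcal{G}$, I would compute
\[
P(kh) = (l_{\mathcal{K}\mathcal{H}})_* P(\epsilon_{\beta(h)}) = (l_{\mathcal{K}})_*(l_{\mathcal{H}})_* P(\epsilon_{\beta(h)}) = (l_{\mathcal{K}})_* P(h),
\]
which is the required identity. Since $\mathcal{K}$ and $h$ were arbitrary, this gives $(l_{\mathcal{K}})_* P = P$ for every bisection. The only mildly technical obstacle is the verification of the functoriality $l_{\mathcal{K}\mathcal{H}} = l_{\mathcal{K}} \circ l_{\mathcal{H}}$; once this compatibility between the Fr\'{e}chet group structure on $\mathcal{B}(G)$ and the action by left translations is in place, the proof is just a telescoping of two applications of the hypothesis, one at $h$ and one at $kh$.
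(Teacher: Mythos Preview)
The paper does not supply its own proof of this proposition: it is quoted from Xu's paper \cite{xu} (``Then we have the following result \cite{xu}'') and no argument is given. So there is nothing in the paper to compare your proposal against.

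Your argument is correct and is in fact the standard one. Two small remarks. First, your target identity $(l_{\mathcal{K}})_*P = P$ for every bisection $\mathcal{K}$ is exactly the bisection formulation of left invariance; via the paper's Remark preceding the proposition (the restriction of $l_{\mathcal{K}}$ to an $\alpha$-fibre is $l_g$), it recovers the pointwise condition $P(gh) = (l_g)_* P(h)$ used elsewhere in the paper. Second, although the statement writes $\mathcal{G} \in \mathcal{B}(G)$, the relevant objects are really \emph{local} bisections (global ones need not pass through a given $g$); your proof already treats $\mathcal{H}$ as local, and the product $\mathcal{K}\mathcal{H}$ and the identity $l_{\mathcal{K}\mathcal{H}} = l_{\mathcal{K}} \circ l_{\mathcal{H}}$ make sense locally, so nothing is lost.
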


\begin{defn}
 A {\it Lie algebroid} $(A, [~, ~], a)$ over a smooth manifold M is a smooth vector bundle $A$ over
M together with a Lie algebra structure $[~, ~]$ on the space $\Gamma{A}$ of the smooth sections of $A$ and a bundle map $a : A \rightarrow T M $ , called the {\it anchor}, such that
\begin{enumerate}
 \item the induced map $ a : \Gamma{A} \rightarrow \mathcal{X}^1(M) $ is a Lie algebra homomorphism, where $\mathcal{X}^1(M)$ is the usual Lie algebra of vector fields on $M$.
 \item For any $ X, Y \in \Gamma{A} $ and $f \in C^\infty (M)$, we have
$$ [X, f Y ] = f [X, Y ] + (a(X)f )Y. $$
\end{enumerate}
\end{defn}
We may denote a Lie algebroid simply by $A$, when all the structures are understood. Any Lie algebra is a Lie algebroid over a point with zero anchor. The tangent bundle of any smooth manifold is a Lie algebroid with usual Lie bracket of vector fields and identity as anchor.\\
{\bf Lie algebroid of a Lie groupoid.}
Given a Lie groupoid $G \rightrightarrows M$, its Lie algebroid consists of the vector bundle $AG \rightarrow M$ whose fiber at $x \in M$ coincides with the tangent space
at the unit element $\epsilon_x$ of the $\alpha$-fiber at $x$. Then the space of sections of $AG$ can be identified with the left invariant vector fields
\begin{align*}
 \mathcal{X}^1_{\text{inv}}(G) = \{ X \in \Gamma (T^\alpha G) = \Gamma (\text{ker} (d\alpha))| X_{gh} = (l_g)_* X_h, \forall (g, h) \in  G_{(2)} \}
\end{align*}
on $G$. Since the space of left invariant vector fields on $G$
is closed under the Lie bracket, therefore it defines a Lie bracket on $\Gamma AG$. The anchor $a$ of $AG$ is defined to be the differential of the target map $\beta$ restricted to $AG$.

Let $AG $ be the Lie algebroid of the Lie groupoid $G \rightrightarrows M$. Given any $X \in \Gamma AG$, let $\overleftarrow{X}$ be the corresponding left invariant vector field on $G$. Then there exists an $\epsilon > 0$ and a $1$-parameter family of transformations $\phi_t$ $(|t| < \epsilon)$, generated by $\overleftarrow{X}$ (\cite{mac-book}). Suppose each
$\phi_t$ is defined on all of $M$, where $M$ is identified with a closed embedded submanifold of $G$ via the unit map. We denote the image of $M$ via $\phi_t$
by  exp $tX$. Then exp $tX$ is a bisection of the groupoid (for all $|t| < \epsilon$) and satisfies $1$-parameter group like conditions, namely
\begin{align*}
 \text{exp} (t+s) X =  \text{exp} \hspace*{0.1cm} tX \cdot \text{exp} \hspace*{0.1cm}sX, \hspace*{1cm} \text {whenever} \hspace*{0.5cm} |t| , |s| , |t+s| < \epsilon,
\end{align*}
where on the right hand side, we used the multiplication of bisections.

\section{Coisotropic submanifolds}
Let $M$ be a manifold and $\Pi \in \mathcal{X}^n(M)$ be a $n$-vector field on $M$. Let
\begin{align*}
 \Pi^{\sharp} : {\bigwedge}^{n-1}T^*M \rightarrow TM
\end{align*}
be the induced bundle map given by
\begin{align*}
 \langle \beta, \Pi^{\sharp} (\alpha_1 \wedge \cdots \wedge \alpha_{n-1}) \rangle = \Pi (\alpha_1, \ldots, \alpha_{n-1}, \beta)
\end{align*}
for all $\alpha_1, \ldots, \alpha_{n-1}, \beta \in T_x^* M,$ $x \in M$.

We recall the following definition from \cite{pont-geng-xu}.
\begin{defn}
A submanifold $C \hookrightarrow M$ is said to be {\it coisotropic} with respect to $\Pi$,
if
\begin{align*}
 \Pi^{\sharp} ({\bigwedge}^{n-1} (TC)^0) \subset TC
\end{align*}
where $$(TC)^0_x = \{ \alpha \in  T_x^* M | \hspace*{0.1cm} \alpha (v) = 0, \forall v \in T_xC\},~~x \in C,$$
or equivalently,
\begin{align*}
 \Pi_x (\alpha_1, \ldots, \alpha_n) = 0, \forall \alpha_i \in (TC)^0_x, x \in C.
\end{align*}
\end{defn}

We have the following easy observation for coisotropic submanifolds of a Nambu-Poisson manifold.
\begin{prop}
 Let $(M, \Pi)$ be a Nambu-Poisson manifold of order $n$ and $C$ be a closed embedded submanifold of $M$. Let $\mathcal{I} (C) = \{ f \in C^\infty(M)\big| f|_C \equiv 0 \}$ denote the vanishing ideal of $C$. Then the followings are equivalent:
\begin{enumerate}
\item $C$ is a coisotropic submanifold;
\item $\mathcal{I} (C)$ is a Nambu-Poisson subalgebra;
\item for every $f_1, \ldots, f_{n-1} \in \mathcal{I} (C)$, the Hamiltonian vector field $X_{f_1...f_{n-1}}$ is tangent to $C$.
\end{enumerate}
\end{prop}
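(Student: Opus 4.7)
The plan is to base everything on the standard description of the conormal bundle of a closed embedded submanifold in terms of its vanishing ideal: for every $x\in C$ one has
\[
(TC)^0_x = \{df(x) \mid f\in\mathcal{I}(C)\},
\]
and, conversely, every $f\in C^\infty(M)$ with $df(x)\in(TC)^0_x$ can be adjusted by an element of $\mathcal{I}(C)^2$ so as to lie in $\mathcal{I}(C)$ near $x$. This fact, together with the partition-of-unity trick extending germs of vanishing functions to global ones, is the only nontrivial ingredient; the rest is a direct translation between the bracket formulation, the bundle-map formulation, and the Hamiltonian vector field.

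For the equivalence (1)$\Leftrightarrow$(2), I would unwind the coisotropy condition in its bracket form: $C$ is coisotropic precisely when $\Pi_x(\alpha_1,\ldots,\alpha_n)=0$ for all $\alpha_i\in(TC)^0_x$ and all $x\in C$. By the description of $(TC)^0_x$ above, choosing $\alpha_i=df_i(x)$ with $f_i\in\mathcal{I}(C)$ and varying $x$, this is exactly the statement that $\{f_1,\ldots,f_n\}=\Pi(df_1,\ldots,df_n)$ vanishes on $C$, i.e., lies in $\mathcal{I}(C)$. The reverse implication uses that the differentials at $x\in C$ of elements of $\mathcal{I}(C)$ span $(TC)^0_x$, together with the pointwise multilinearity of $\Pi$.

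For (1)$\Leftrightarrow$(3), I would recall that, by definition of the bundle map $\Pi^{\sharp}$, for $f_1,\ldots,f_{n-1}\in C^\infty(M)$ the Hamiltonian vector field satisfies
\[
X_{f_1\ldots f_{n-1}}(x) = \Pi^{\sharp}_x\bigl(df_1(x)\wedge\cdots\wedge df_{n-1}(x)\bigr).
\]
Tangency of this vector field to $C$ at every point $x\in C$ is, by duality, equivalent to $\Pi^{\sharp}_x(df_1(x)\wedge\cdots\wedge df_{n-1}(x))$ pairing to zero with every element of $(TC)^0_x$. If all $f_i\in\mathcal{I}(C)$, then (using the key lemma again for the last slot) this is precisely the coisotropy condition at $x$; conversely, if $C$ is coisotropic, the pairing with any $df_n(x)$, $f_n\in\mathcal{I}(C)$, vanishes, which forces $X_{f_1\ldots f_{n-1}}(x)\in T_xC$.

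The only place one must be careful is the span property for $(TC)^0$: one needs that \emph{every} element of $(TC)^0_x$ arises as $df(x)$ for some $f\in\mathcal{I}(C)$, not merely a linear combination of such, and that this $f$ can be chosen globally on $M$. For a closed embedded submanifold this is standard, obtained by taking local defining functions in a tubular neighborhood and multiplying by a bump function, which is the main (and essentially only) technical step in the argument.
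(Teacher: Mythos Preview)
Your proposal is correct and follows essentially the same approach as the paper: both arguments hinge on the single technical fact that for a closed embedded submanifold $(TC)^0_x=\{df(x)\mid f\in\mathcal{I}(C)\}$, and then translate between the bracket, the bundle map $\Pi^\sharp$, and the Hamiltonian vector field. The only cosmetic difference is that the paper organizes the argument as the cycle (1)$\Rightarrow$(2)$\Rightarrow$(3)$\Rightarrow$(1), whereas you prove (1)$\Leftrightarrow$(2) and (1)$\Leftrightarrow$(3) separately; the underlying computations are the same.
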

\begin{proof}
 (1) $\Rightarrow$ (2) Let $f_1, \ldots, f_n \in \mathcal{I} (C) $.
Then for any $x \in C$, $d_xf_i \in (TC)^0_x$, for all $i=1, \ldots, n.$ Now since $C$ is a coisotropic submanifold, we have
\begin{align*}
 \{f_1, \ldots, f_n\}(x) = \Pi_x (d_xf_1, \ldots, d_xf_n) = 0,~ \forall x \in C.
\end{align*}
Hence $\{f_1, \ldots, f_n\} \in \mathcal{I} (C)$. Therefore $\mathcal{I} (C)$ is a Nambu-Poisson subalgebra.

(2) $\Rightarrow$ (3) Let $f_1, \ldots, f_{n-1} \in \mathcal{I} (C)$ and $x \in C$.
Let $\alpha \in (TC)^0_x.$ Then there exists a function $g$ vanishing on $C$ such that $d_xg = \alpha$. Since $\mathcal{I} (C)$ is a Nambu-Poisson subalgebra, we have
\begin{align*}
 \{f_1, \ldots, f_{n-1}, g \}(x) = 0.
\end{align*}
Thus,
\begin{align*}
 X_{f_1\ldots f_{n-1}}\big|_x (\alpha) = X_{f_1\ldots f_{n-1}}\big|_x (d_xg) = \{f_1, \ldots, f_{n-1}, g \}(x) = 0
\end{align*}
and consequently, $X_{f_1\ldots f_{n-1}}$ is tangent to $C$.

(3) $\Rightarrow$ (1) Let $x \in C$ and $\alpha_1, \ldots, \alpha_n \in (TC)^0_x.$ Then there exist functions $f_1, \ldots, f_n \in \mathcal{I}(C)$ such that
$d_xf_i = \alpha_i,$ $\forall i = 1, \ldots, n.$ Therefore,
\begin{align*}
 \Pi_x (\alpha_1, \ldots, \alpha_n) = \Pi_x (d_xf_1, \ldots, d_xf_n) = X_{f_1...f_{n-1}} \big|_x (d_xf_n) = 0.
\end{align*}
Hence $C$ is a coisotropic submanifold of $M$.
\end{proof}

\begin{prop}
 Let $(M, \Pi_M)$ and $(N, \Pi_N)$ be two Nambu-Poisson manifolds of same order $n$ and $C \hookrightarrow N$ be a coisotropic submanifold of $N$ with respect to $\Pi_N$. If $\phi: M \rightarrow N$
is a Nambu-Poisson map transverse to $C$, then $\phi^{-1}(C)$ is a coisotropic submanifold of $M$ with respect to $\Pi_M$ (the result holds true for manifolds with $n$-vector fields such that
$\phi_* \Pi_M = \Pi_N$).
\end{prop}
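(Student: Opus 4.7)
The plan is to exploit the standard transversality relation between conormal bundles and then transfer the vanishing condition from $C \subset N$ to $\phi^{-1}(C) \subset M$ using the Nambu-Poisson map property. Set $D := \phi^{-1}(C)$. Since $\phi$ is transverse to $C$, $D$ is a submanifold of $M$ with $T_x D = (T_x\phi)^{-1}(T_{\phi(x)}C)$ for each $x \in D$.

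The first step is to identify the conormal bundle of $D$ in terms of that of $C$. One inclusion is immediate: for any $\beta \in (T_{\phi(x)}C)^0$ and $v \in T_xD$, one has $T_x\phi(v) \in T_{\phi(x)}C$, whence $\langle (T_x\phi)^*\beta,\, v\rangle = \langle \beta,\, T_x\phi(v)\rangle = 0$, so $(T_x\phi)^*(T_{\phi(x)}C)^0 \subset (T_xD)^0$. Transversality gives $T_x\phi(T_xM) + T_{\phi(x)}C = T_{\phi(x)}N$, which dually forces $(T_x\phi)^*$ to be injective on $(T_{\phi(x)}C)^0$, and a dimension count ($\dim (T_xD)^0 = \operatorname{codim} D = \operatorname{codim} C = \dim (T_{\phi(x)}C)^0$) yields the equality
\begin{equation*}
(T_xD)^0 \;=\; (T_x\phi)^*\bigl((T_{\phi(x)}C)^0\bigr).
\end{equation*}

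Next, given $\alpha_1,\ldots,\alpha_n \in (T_xD)^0$, write each $\alpha_i = (T_x\phi)^*\beta_i$ with $\beta_i \in (T_{\phi(x)}C)^0$. Locally near $\phi(x)$, each $\beta_i$ may be realized as $d_{\phi(x)}f_i$ for a smooth function $f_i$ on $N$ vanishing on $C$ (so that $\beta_i$ lies in the conormal bundle). Then $(T_x\phi)^*\beta_i = d_x(\phi^*f_i)$, and the Nambu-Poisson map condition $\phi^*\{f_1,\ldots,f_n\}_N = \{\phi^*f_1,\ldots,\phi^*f_n\}_M$ evaluated at $x$ gives
\begin{equation*}
\Pi_M(x)(\alpha_1,\ldots,\alpha_n) \;=\; \Pi_N(\phi(x))(\beta_1,\ldots,\beta_n).
\end{equation*}
Since $\phi(x) \in C$ and $C$ is coisotropic with respect to $\Pi_N$, the right-hand side vanishes; hence $\Pi_M(x)(\alpha_1,\ldots,\alpha_n) = 0$, which is exactly the coisotropy of $D$ with respect to $\Pi_M$. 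The parenthetical generalization follows identically, since the argument only uses the pointwise identity $\Pi_N^\sharp_{\phi(x)} = T_x\phi \circ \Pi_M^\sharp_x \circ (T_x\phi)^*$ encoded in $\phi_*\Pi_M = \Pi_N$.

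The main subtlety is the conormal bundle identification, where one must use transversality rather than mere submanifoldhood; the remainder is a direct application of the Nambu-Poisson map condition after one observes that conormal covectors can be represented as differentials of functions vanishing on $C$.
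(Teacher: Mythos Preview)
Your proof is correct and follows essentially the same approach as the paper: both arguments hinge on the conormal identification $(T_xD)^0 = (T_x\phi)^*\bigl((T_{\phi(x)}C)^0\bigr)$ from transversality, then transfer the coisotropy condition via the Nambu-Poisson map relation. The only cosmetic difference is that the paper works with the bundle-map formulation $\Pi_M^\sharp\bigl(\bigwedge^{n-1}(TD)^0\bigr)\subset TD$ directly, while you use the equivalent vanishing condition $\Pi_M(\alpha_1,\ldots,\alpha_n)=0$; your detour through function representatives $f_i$ is unnecessary (the pointwise identity $\Pi_M(x)\bigl((T_x\phi)^*\beta_1,\ldots\bigr)=\Pi_N(\phi(x))(\beta_1,\ldots)$ is exactly what $\phi_*\Pi_M=\Pi_N$ means), but harmless.
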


\begin{proof}
Since $\phi$ is transverse to $C$, therefore $\phi^{-1}(C)$ is a submanifold of $M$. Moreover
\begin{align*}
 T(\phi^{-1}(C)) = (T \phi)^{-1} TC.
\end{align*}
Therefore $T(\phi^{-1}(C))^0 = (T \phi)^{*} (TC)^0$. Observe that
\begin{align*}
 T\phi (\Pi_M^{\sharp} \big( {\bigwedge}^{n-1} T(\phi^{-1}(C))^0  \big)) =& T\phi ( \Pi_M^{\sharp} \big((T \phi)^{*} {\bigwedge}^{n-1} (TC)^0 \big))\\
=& \Pi_N^{\sharp} ({\bigwedge}^{n-1} (TC)^0)\\
\subseteq & TC.
\end{align*}
Thus,
\begin{align*}
 \Pi_M^{\sharp} \big( {\bigwedge}^{n-1} T(\phi^{-1}(C))^0  \big) \subseteq (T\phi)^{-1} TC = T(\phi^{-1}(C))
\end{align*}
and hence $\phi^{-1}(C)$ is coisotropic with respect to $\Pi_M.$
\end{proof}

\begin{prop}\label{image-coiso}
 Let $\phi: (M, \Pi_M) \rightarrow (N, \Pi_N)$ be a Nambu Poisson map between two Nambu-Poisson manifolds $(M, \Pi_M)$ and $(N, \Pi_N)$ and $C\hookrightarrow M$ be a coisotropic submanifold
of $M$. Assume that $\phi(C)$ is a submanifold of $N$. Then $\phi(C)$ is a coisotropic submanifold of $N$ (the result holds true for manifolds with $n$-vector fields such that $\phi_{*}\Pi_M = \Pi_N$).
\end{prop}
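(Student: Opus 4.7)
The plan is to reduce coisotropy of $\phi(C)$ to coisotropy of $C$ by pulling back conormal covectors through $\phi$, then exploiting the Nambu-Poisson map condition rephrased in terms of the sharp maps, as stated in the Remark preceding the definitions of Lie groupoids. Concretely, I would pick an arbitrary $y \in \phi(C)$, choose a preimage $x \in C$ with $\phi(x) = y$, and take arbitrary covectors $\alpha_1,\ldots,\alpha_n \in (T_y\phi(C))^0$; the goal is to show $\Pi_N(\alpha_1,\ldots,\alpha_n)|_y = 0$.

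First I would verify the key pullback observation: for each $\alpha_i \in (T_y\phi(C))^0$, the pullback $(T_x\phi)^* \alpha_i$ lies in $(T_xC)^0$. This is immediate from definitions, since any $v \in T_xC$ satisfies $T_x\phi(v) \in T_y\phi(C)$, hence
\[
\langle (T_x\phi)^*\alpha_i, v \rangle = \langle \alpha_i, T_x\phi(v)\rangle = 0.
\]
Next I would invoke the fact that $\phi$ is a Nambu-Poisson map, which, in terms of the induced bundle maps, amounts to $\Pi_N^{\sharp}|_y = T_x\phi \circ \Pi_M^{\sharp}|_x \circ (T_x\phi)^*$, or equivalently
\[
\Pi_N(\alpha_1,\ldots,\alpha_n)|_y = \Pi_M\bigl((T_x\phi)^*\alpha_1,\ldots,(T_x\phi)^*\alpha_n\bigr)|_x.
\]

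Finally, since $C$ is coisotropic with respect to $\Pi_M$ and each $(T_x\phi)^*\alpha_i$ belongs to $(T_xC)^0$, the right-hand side vanishes. As $y$ and the $\alpha_i$ were arbitrary, this proves $\Pi_N(\alpha_1,\ldots,\alpha_n) \equiv 0$ on conormal covectors of $\phi(C)$, i.e., $\phi(C)$ is coisotropic with respect to $\Pi_N$. The only subtlety worth flagging is that the argument nowhere uses the Fundamental identity, so the stronger parenthetical statement about manifolds carrying $n$-vector fields with $\phi_*\Pi_M = \Pi_N$ follows by exactly the same proof; no step requires $C$ or $\phi(C)$ to be a Nambu-Poisson submanifold, only that $\phi(C)$ be a smooth submanifold so that conormal fibers are well-defined. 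I do not anticipate a genuine obstacle: the proof is essentially parallel to the classical Poisson case and is a direct chase through the sharp-map formulation of the Nambu-Poisson map condition.
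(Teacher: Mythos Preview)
Your proof is correct and follows essentially the same approach as the paper: both arguments rest on the observation that $(T_x\phi)^*$ sends $(T_y\phi(C))^0$ into $(T_xC)^0$, then invoke the Nambu-Poisson map condition in its sharp-map form to transfer the vanishing from $\Pi_M$ to $\Pi_N$. The only cosmetic difference is that the paper phrases everything in terms of the inclusion $\Pi_N^\sharp\bigl(\bigwedge^{n-1}(T\phi(C))^0\bigr) \subseteq T(\phi(C))$, whereas you use the equivalent scalar condition $\Pi_N(\alpha_1,\ldots,\alpha_n)=0$.
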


\begin{proof}
We have $T(\phi(C)) \supseteq T\phi (TC)$ and $(T\phi)^* (T(\phi(C)))^0 \subseteq (TC)^0$. Therefore,
\begin{align*}
 \Pi_N^{\sharp} ({\bigwedge}^{n-1} T(\phi(C))^0) =& T\phi (\Pi_M^{\sharp} ((T\phi)^* {\bigwedge}^{n-1} T(\phi(C))^0)) \hspace*{1cm}(\text{since ~} \phi \text{~ is a N-P map})\\
\subseteq & T\phi (\Pi_M^{\sharp} ({\bigwedge}^{n-1} (TC)^0))\\
\subseteq & T\phi (TC) \hspace*{1cm}(\text{since~} C \hookrightarrow M \text{~ is coisotropic})\\
\subseteq & T(\phi(C))
\end{align*}
which shows that $\phi(C)$ is a coisotropic submanifold of $N$.
\end{proof}

Using the terminology of coisotropic submanifold with respect to any multivector field allows us to extend the results of Weinstein \cite{wein} from Poisson bivector field to Nambu-Poisson tensor or more generally to any multivector field.

\begin{prop}\label{nambu map-coiso}
 Let $(M, \Pi_M)$ and $(N, \Pi_N)$ be two manifolds with $n$-vector fields and $\phi: M \rightarrow N$ be a smooth map. Then $\phi$ is a $(\Pi_M, \Pi_N)$-map, that is $\phi_* \Pi_M = \Pi_N$ if and only if its graph
\begin{align*}
 \text{Gr}(\phi) = \{(m, \phi(m))| m \in M \}
\end{align*}
is a coisotropic submanifold of $M \times N$ with respect to $\Pi_M \oplus (-1)^{n-1} {\Pi}_N$.
\end{prop}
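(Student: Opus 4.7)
The plan is to reduce the coisotropic condition on $\text{Gr}(\phi)$ to the pointwise form of $\phi_\ast\Pi_M = \Pi_N$ via direct linear algebra in the cotangent fibers at each point $(m,\phi(m))$. Both conditions are pointwise, so once the conormal bundle of the graph is understood the equivalence follows immediately.

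First I would identify the conormal bundle. The tangent space to $\text{Gr}(\phi)$ at $(m,\phi(m))$ is the graph of $T_m\phi$, so a covector $(\alpha,\beta) \in T^\ast_m M \oplus T^\ast_{\phi(m)} N$ annihilates it if and only if $\alpha(v) + \beta(T_m\phi(v)) = 0$ for every $v \in T_m M$, i.e. $\alpha = -(T_m\phi)^\ast \beta$. Hence
$$\bigl(T\,\text{Gr}(\phi)\bigr)^0_{(m,\phi(m))} = \bigl\{\bigl(-(T_m\phi)^\ast\beta,\,\beta\bigr) : \beta \in T^\ast_{\phi(m)} N\bigr\}.$$
Next, I would use that $\Pi_M$ and $\Pi_N$, pulled back to $M\times N$ via the two projections, pair only with their own cotangent factors, giving
$$\bigl(\Pi_M \oplus (-1)^{n-1}\Pi_N\bigr)\bigl((\alpha_1,\beta_1),\ldots,(\alpha_n,\beta_n)\bigr) = \Pi_M(\alpha_1,\ldots,\alpha_n) + (-1)^{n-1}\Pi_N(\beta_1,\ldots,\beta_n).$$

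Substituting the conormal form $\alpha_i = -(T_m\phi)^\ast\beta_i$ and pulling the $n$ minus signs out by multilinearity turns the first summand into $(-1)^n \Pi_M\bigl((T_m\phi)^\ast\beta_1,\ldots,(T_m\phi)^\ast\beta_n\bigr)$, which combined with the $(-1)^{n-1}$ in front of $\Pi_N$ leaves an overall relative sign of $-1$. Thus the coisotropy of $\text{Gr}(\phi)$ at $(m,\phi(m))$ is equivalent to
$$\Pi_M\bigl((T_m\phi)^\ast\beta_1,\ldots,(T_m\phi)^\ast\beta_n\bigr) = \Pi_N(\beta_1,\ldots,\beta_n)$$
holding for all $\beta_1,\ldots,\beta_n \in T^\ast_{\phi(m)} N$, which is precisely the pointwise form of $\phi_\ast\Pi_M = \Pi_N$ recorded in the Remark after the definition of a $(\Pi_M,\Pi_N)$-map. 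Since $m$ is arbitrary the equivalence passes to the global statement. No real obstacle arises; the only subtlety is bookkeeping the sign $(-1)^{n-1}$, which is exactly the factor needed so that the final identity is free of residual signs for every $n$.
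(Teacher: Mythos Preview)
Your proof is correct and follows essentially the same route as the paper's: identify the conormal bundle of $\text{Gr}(\phi)$ as $\{(-(T_m\phi)^\ast\beta,\beta)\}$ and unwind the coisotropy condition to the pointwise identity $\phi_\ast\Pi_M=\Pi_N$. The only cosmetic difference is that the paper phrases coisotropy via $\Pi^\sharp\bigl(\bigwedge^{n-1}(TC)^0\bigr)\subset TC$ (using $n-1$ conormal covectors and checking the resulting vector is tangent to the graph), whereas you use the equivalent formulation $\Pi(\alpha_1,\ldots,\alpha_n)=0$ on $n$ conormal covectors; the sign bookkeeping and conclusion are identical.
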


\begin{proof}
 Let $C = \text{Gr}(\phi) \subset M \times N$. Then $C$ is a closed embedded submanifold of $M \times N.$
Note that, a tangent vector to the graph consist of a pair $(v_m, (T\phi)(v_m))$, where $m \in M$, $v_m \in T_mM$. Therefore,
$(TC)^0$ consists of a pair of covectors $(-(T\phi)^*\psi, \psi)$, where $\psi \in T_{\phi(m)}^*N.$
Therefore, Gr($\phi)$ is a coisotropic submanifold of $M \times N$ with respect to $\Pi_M \oplus (-1)^{n-1} {\Pi}_N$ if and only if
$(\Pi_M^{\sharp} \times (-1)^{n-1} \Pi_N^{\sharp})$ maps $(-(T\phi)^*\psi_1, \psi_1) \wedge \cdots \wedge (-(T\phi)^*\psi_{n-1}, \psi_{n-1}) $ into $TC$,
for all $\psi_1, \ldots, \psi_{n-1} \in T^*_{\phi(m)}N$ and $m \in M$. In other words,
\begin{align*}
(T\phi) \bigg(\Pi_M^{\sharp} (- (T\phi)^*\psi_1, \ldots, - (T\phi)^*\psi_{n-1})\bigg) = (-1)^{n-1} {\Pi}_N^{\sharp} (\psi_1, \ldots, \psi_{n-1})
\end{align*}
that is,
\begin{align*}
(T\phi) \bigg(\Pi_M^{\sharp} ( (T\phi)^*\psi_1, \ldots,  (T\phi)^*\psi_{n-1})\bigg) =  {\Pi}_N^{\sharp} (\psi_1, \ldots, \psi_{n-1}).
\end{align*}
This is equivalent to the condition that $\phi$ is a $(\Pi_M, \Pi_N)$-map.
\end{proof}

\begin{defn}
 Let $(M, \Pi_M)$ be a Nambu-Poisson manifold of order $n$ and $\phi: M \rightarrow N$ be a smooth surjective map. If there exist a Nambu-Poisson structure $\Pi_N$ (of order $n$) on $N$
which makes $\phi$ into a Nambu-Poisson map, then $\Pi_N$ is called the Nambu-Poisson structure {\it coinduced} by the mapping $\phi$.
\end{defn}

The following is a characterization of coinduced Nambu-Poisson structure.
 
\begin{prop}\label{coinduced}
 Let $(M, \Pi_M)$ be a Nambu-Poisson manifold of order $n$ and $\phi: M \rightarrow N$ be a smooth surjective map from $M$ to some manifold $N$. Then $N$ has a Nambu-Poisson structure coinduced by $\phi$ if and only if for all $f_1, \ldots, f_{n} \in C^\infty(N)$, the function
$\{\phi^*f_1, \ldots, \phi^*f_n\}_M$ is constant along the fibers of $\phi$.
\end{prop}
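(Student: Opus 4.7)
The plan is to prove the two directions separately, with the nontrivial content being the ``if'' direction. The forward direction falls out of the definition of a Nambu-Poisson map, while the backward direction consists of defining the putative bracket on $N$ by forcing $\phi$ to be Nambu-Poisson, checking that this is well-defined and smooth, and then transferring the axioms from $M$ to $N$ via pullback.

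For the ``only if'' direction, I would simply observe that if $\Pi_N$ is a Nambu-Poisson structure coinduced by $\phi$, then by definition
\begin{align*}
\{\phi^{*}f_1,\ldots,\phi^{*}f_n\}_M \;=\; \phi^{*}\{f_1,\ldots,f_n\}_N,
\end{align*}
and any pullback $\phi^{*}h$ is manifestly constant on each fiber $\phi^{-1}(y)$.

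For the ``if'' direction, I would use the hypothesis to define, for every tuple $f_1,\ldots,f_n \in C^\infty(N)$,
\begin{align*}
\{f_1,\ldots,f_n\}_N(y) \;:=\; \{\phi^{*}f_1,\ldots,\phi^{*}f_n\}_M(x), \qquad x \in \phi^{-1}(y).
\end{align*}
The assumption that $\{\phi^{*}f_1,\ldots,\phi^{*}f_n\}_M$ is constant along the fibers of $\phi$ is precisely the condition needed for this value to be independent of the chosen preimage $x$, so $\{f_1,\ldots,f_n\}_N$ is a well-defined function on $N$, and by construction $\phi^{*}\{f_1,\ldots,f_n\}_N = \{\phi^{*}f_1,\ldots,\phi^{*}f_n\}_M$. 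Smoothness of $\{f_1,\ldots,f_n\}_N$ then follows from the standard fact that for a surjective submersion $\phi$, a function on $N$ is smooth iff its pullback to $M$ is smooth; since the right-hand side is smooth, so is the bracket on $N$.

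The remaining verifications are mechanical. Skew-symmetry and $\mathbb{R}$-multilinearity of $\{\cdot,\ldots,\cdot\}_N$ are inherited termwise from $\{\cdot,\ldots,\cdot\}_M$. The Leibniz rule follows from the Leibniz rule on $M$ together with $\phi^{*}(fg) = (\phi^{*}f)(\phi^{*}g)$. The Fundamental identity on $N$ reduces, after pulling everything back by $\phi$ and using $\phi^{*}\{g_1,\ldots,g_n\}_N = \{\phi^{*}g_1,\ldots,\phi^{*}g_n\}_M$, to the Fundamental identity for $\Pi_M$ applied to the pullback functions; since $\phi$ is surjective, $\phi^{*}$ is injective on $C^\infty(N)$, so the identity on $N$ follows. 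The only delicate point in the argument is the smoothness of the descended bracket, which is where the implicit submersion structure of $\phi$ is used; the algebraic axioms themselves transfer cost-free.
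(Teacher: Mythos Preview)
Your proof is correct and follows the same strategy as the paper: define the bracket on $N$ via the fiberwise-constant value and verify that $\phi$ becomes a Nambu-Poisson map, with the converse immediate from the definition. You are in fact more thorough than the paper, which simply asserts ``Clearly this bracket defines a coinduced Nambu-Poisson structure on $N$'' without checking smoothness or the axioms; your observation that smoothness of the descended bracket genuinely requires $\phi$ to be a submersion is correct and identifies a hypothesis that the paper's statement omits but tacitly relies on (note that the companion Proposition~\ref{coinduced-coiso} does assume $\phi$ is a surjective submersion).
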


\begin{proof}
 Let $f_1, \ldots, f_n \in C^\infty(N)$. If the function $\{ \phi^*f_1, \ldots, \phi^*f_n \}_M$ is constant along the $\phi$-fibers, then there exists a function on $N$, which we denote by $\{f_1, \ldots, f_n\}_N$ such that
$\{\phi^*f_1, \ldots, \phi^*f_n\}_M = \phi^* \{f_1, \ldots, f_n\}_N $. Clearly this bracket defines a coinduced Nambu-Poisson structure on $N$.

Conversely, suppose that there is a Nambu-Poisson bracket $\{~, \ldots, ~\}_N$ on $N$ coinduced by $\phi$. Then for any $y \in N,$
\begin{align*}
\{\phi^*f_1, \ldots, \phi^*f_n\}_M(\phi^{-1}\{y\}) =& (\phi^* \{f_1, \ldots, f_n\}_N )(\phi^{-1}y)\\
=& \{f_1, \ldots, f_n\}_N (y)
\end{align*}
proving $\{\phi^*f_1, \ldots, \phi^*f_n\}_M$ is constant along the $\phi$-fibers.
\end{proof}

\begin{remark}\label{rem-coinduced}
 Let $(M, \Pi_M)$ be a manifold with an $n$-vector field and $\phi: M \rightarrow N$ be a smooth map. Then there exists an $n$-vector field
$\Pi_N$ on $N$ such that $\phi$ is a $(\Pi_M, \Pi_N)$-map if and only if for all $f_1, \ldots, f_n \in C^\infty(N)$, the function
$\{\phi^*f_1, \ldots, \phi^*f_n\}_M$ is constant along the fibers of $\phi$.
\end{remark}

\begin{prop}
Let $(M, \Pi_M)$ be a Nambu-Poisson manifold and $\phi : M \rightarrow N$ be a surjective submersion with connected fibers. Let $\text{ker} \hspace*{0.1cm} \phi_*(m)$ is spanned by local Hamiltonian vector fields (that is, $\text{ker} \hspace*{0.1cm} \phi_*(m) \subset \mathcal{D}_mM $), for all $m \in M$. Then 
$N$ has a Nambu-Poisson structure  coinduced by $\phi.$
\end{prop}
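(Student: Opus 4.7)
The plan is to invoke Proposition \ref{coinduced} (equivalently, Remark \ref{rem-coinduced}), which reduces the assertion to showing that for any $f_1,\ldots,f_n \in C^\infty(N)$ the function
\[
F := \{\phi^*f_1,\ldots,\phi^*f_n\}_M
\]
is constant along the fibers of $\phi$. Since the fibers of $\phi$ are connected by hypothesis, and since $\phi$ is a submersion, this is equivalent to checking that $v(F)=0$ for every $m \in M$ and every $v \in T_m(\phi^{-1}(\phi(m))) = \ker\phi_*(m)$.

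Fix such an $m$ and $v$. By the assumption $\ker\phi_*(m) \subset \mathcal{D}_m M$, one may write $v$ as a linear combination of values at $m$ of local Hamiltonian vector fields; the content of the hypothesis ``spanned by local Hamiltonian vector fields'' is that these local Hamiltonians $X_{h_1\ldots h_{n-1}}$ can actually be arranged to be tangent to the fibers of $\phi$ throughout a neighborhood of $m$, i.e.\ $\phi_* X_{h_1\ldots h_{n-1}} \equiv 0$ locally. Granted this local tangency, it suffices to prove $X_{h_1\ldots h_{n-1}}(F) = 0$ for every such fibrewise local Hamiltonian.

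For such a Hamiltonian the local tangency gives, for each $k$,
\[
X_{h_1\ldots h_{n-1}}(\phi^*f_k) = df_k\bigl(\phi_* X_{h_1\ldots h_{n-1}}\bigr) = 0
\]
on a neighborhood of $m$. The fundamental identity applied to the Nambu bracket on $M$ then yields
\[
X_{h_1\ldots h_{n-1}}(F) = \{h_1,\ldots,h_{n-1},F\}_M = \sum_{k=1}^{n}\{\phi^*f_1,\ldots,X_{h_1\ldots h_{n-1}}(\phi^*f_k),\ldots,\phi^*f_n\}_M = 0,
\]
and every summand on the right vanishes identically near $m$. Combined with the previous step, this shows that $F$ is constant on each (connected) fiber of $\phi$, so Proposition \ref{coinduced} produces the coinduced Nambu-Poisson structure on $N$.

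The delicate step I expect to be the main obstacle is exactly the upgrade from the pointwise inclusion $\ker\phi_*(m) \subset \mathcal{D}_m M$ to the existence of local Hamiltonian vector fields that remain in $\ker\phi_*$ on a full neighborhood of $m$. Without this local (rather than pointwise) tangency, the identity $X_{h_1\ldots h_{n-1}}(\phi^*f_k) = 0$ would hold only at $m$, so the differentials entering the Nambu bracket in the display above need not vanish and the fundamental-identity computation collapses. Once this local fibrewise Hamiltonian frame is produced from the hypothesis, the rest of the argument is the purely algebraic calculation above.
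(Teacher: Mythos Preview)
Your argument is essentially the paper's own: reduce via Proposition~\ref{coinduced} to constancy of $F$ along fibers, then use the Fundamental identity to show that any local Hamiltonian vector field lying in $\ker\phi_*$ annihilates $F$ because each $\phi^*f_k$ is fibre-constant. As for the ``delicate step'' you flag, the paper does not resolve it either---it simply writes ``let $g_1,\ldots,g_{n-1}$ be locally defined functions on $M$ such that $X_{g_1\ldots g_{n-1}}\in\ker\phi_*$'' and proceeds exactly as you do, so the hypothesis is tacitly read in its local (distribution-spanning) sense rather than the purely pointwise one suggested by the parenthetical.
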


\begin{proof}
 Since $\phi$ is a submersion, the fibers of $\phi$ are submanifolds of $M$. Then for $y \in N,$  $\phi^{-1}(\{y\}) = C$ is a submanifold of $M$. Let $g_1, \ldots, g_{n-1}$ be locally defined functions on $M$ such that $X_{g_1...g_{n-1}} \in \text{ker} \hspace*{0.1cm} \phi_{*}$. Let $f_1, \ldots, f_n \in C^\infty(N).$ To prove that $\{\phi^*f_1, \ldots, \phi^*f_n\}$
is constant on the fibers, it is enough to prove that
\begin{align*}
 X_{g_1...g_{n-1}} \{\phi^*f_1, \ldots, \phi^*f_n\} = 0.
\end{align*}
Note that
\begin{align*}
 X_{g_1...g_{n-1}} \{\phi^*f_1, \ldots, \phi^*f_n\} = \sum_{k=1}^n \{\phi^*f_1, \ldots, X_{g_1...g_{n-1}} (\phi^*f_k), \ldots, \phi^*f_n\}
\end{align*}
and the functions $\phi^*f_i$ are constant along the fibers. Hence by the Proposition \ref{coinduced}, there exists a coinduced Nambu-Poisson structure on $N.$
\end{proof}

\begin{prop}\label{coinduced-coiso}
Let $(M, \Pi_M)$ be a  manifold with an $n$-vector field and $\phi: M \rightarrow N$ be a surjective submersion. Then $N$ has an (unique) $n$-vector field $\Pi_N$ such that
$\phi$ is a $(\Pi_M, \Pi_N)$-map if and only if
 $R(\phi) = \{(x,y) \in M \times M|\phi(x)=\phi(y)\}$ is a coisotropic submanifold of $M \times M$ with respect to $\Pi_M \oplus (-1)^{n-1}{\Pi}_M$.
\end{prop}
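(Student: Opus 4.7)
The plan is to reduce the statement to the graph version (Proposition \ref{nambu map-coiso}) for the forward direction, and to a direct conormal-bundle computation combined with Remark \ref{rem-coinduced} for the converse. Throughout, the key observation is that $R(\phi)$ is precisely the preimage of the diagonal $\Delta_N \subset N \times N$ under $\phi \times \phi$.

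For the forward direction, assume $\Pi_N$ exists with $\phi_\ast \Pi_M = \Pi_N$. First I would note that $\phi \times \phi : M \times M \to N \times N$ satisfies $(\phi \times \phi)_\ast (\Pi_M \oplus (-1)^{n-1} \Pi_M) = \Pi_N \oplus (-1)^{n-1} \Pi_N$, a routine consequence of $\phi_\ast \Pi_M = \Pi_N$. Applying Proposition \ref{nambu map-coiso} to the identity map on $N$ shows that $\Delta_N = \text{Gr}(\text{id}_N)$ is coisotropic in $N \times N$ with respect to $\Pi_N \oplus (-1)^{n-1} \Pi_N$. Since $\phi$ is a surjective submersion, so is $\phi \times \phi$, hence it is transverse to $\Delta_N$, and Proposition \ref{image-coiso}'s companion (the pull-back result for coisotropic submanifolds proved earlier in the section) gives that $R(\phi) = (\phi \times \phi)^{-1}(\Delta_N)$ is coisotropic in $M \times M$ with respect to $\Pi_M \oplus (-1)^{n-1} \Pi_M$.

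For the converse, by Remark \ref{rem-coinduced} it suffices to show that $\{\phi^\ast f_1, \dots, \phi^\ast f_n\}_M$ is constant along the fibers of $\phi$, for every $f_1, \dots, f_n \in C^\infty(N)$; uniqueness of $\Pi_N$ follows from surjectivity. Fix $x, y \in M$ with $\phi(x) = \phi(y) = p$, so $(x,y) \in R(\phi)$. I would compute the conormal bundle of $R(\phi)$ at $(x,y)$: since $T_{(x,y)} R(\phi) = \{(u,v) \in T_xM \oplus T_yM : T_x\phi(u) = T_y\phi(v)\}$, a short linear-algebra argument using surjectivity of $T_x\phi$ and $T_y\phi$ gives
\begin{align*}
(TR(\phi))^0_{(x,y)} = \bigl\{ \bigl((T_x\phi)^\ast \eta,\, -(T_y\phi)^\ast \eta\bigr) : \eta \in T_p^\ast N \bigr\}.
\end{align*}

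Plugging $n$ such covectors $\alpha_i = ((T_x\phi)^\ast \eta_i, -(T_y\phi)^\ast \eta_i)$ into $\Pi_M \oplus (-1)^{n-1} \Pi_M$, the $(-1)^{n-1}$ factor cancels with the $(-1)^n$ pulled out of the $n$ minus signs on the second component, yielding
\begin{align*}
(\Pi_M \oplus (-1)^{n-1}\Pi_M)(\alpha_1, \dots, \alpha_n) = \Pi_{M,x}\bigl((T_x\phi)^\ast \eta_1, \dots\bigr) - \Pi_{M,y}\bigl((T_y\phi)^\ast \eta_1, \dots\bigr).
\end{align*}
Coisotropy of $R(\phi)$ forces this to vanish. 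Choosing $f_i \in C^\infty(N)$ with $d_p f_i = \eta_i$ and using $d_x(\phi^\ast f_i) = (T_x\phi)^\ast \eta_i$ translates the identity into $\{\phi^\ast f_1, \dots, \phi^\ast f_n\}_M(x) = \{\phi^\ast f_1, \dots, \phi^\ast f_n\}_M(y)$, as required. The main obstacle, and the step requiring the most care, is identifying the conormal bundle of $R(\phi)$ and tracking the sign in the $(-1)^{n-1}$ factor; once these are pinned down the equivalence is immediate from Remark \ref{rem-coinduced}.
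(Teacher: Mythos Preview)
Your argument is correct. The converse direction is essentially the paper's: both compute the conormal bundle of $R(\phi)$ as $\{(\pm(T_x\phi)^*\eta,\mp(T_y\phi)^*\eta)\}$ and reduce the coisotropy condition to constancy of $\{\phi^*f_1,\dots,\phi^*f_n\}_M$ on fibers, then invoke Remark~\ref{rem-coinduced}. The only cosmetic difference is that the paper phrases coisotropy via $\Pi^\sharp(\bigwedge^{n-1}(TC)^0)\subset TC$ and obtains an equality of tangent vectors (its Equation~(\ref{firsteqn})), whereas you use the equivalent vanishing condition $\Pi(\alpha_1,\dots,\alpha_n)=0$ on conormals and obtain an equality of scalars directly.

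The forward direction is where you genuinely diverge. The paper treats both implications at once through the same conormal computation, running the chain of equivalences in reverse. You instead observe that $\Delta_N=\text{Gr}(\text{id}_N)$ is coisotropic by Proposition~\ref{nambu map-coiso}, that $(\phi\times\phi)_*(\Pi_M\oplus(-1)^{n-1}\Pi_M)=\Pi_N\oplus(-1)^{n-1}\Pi_N$, and that the surjective submersion $\phi\times\phi$ is transverse to $\Delta_N$, so the earlier pull-back proposition gives $R(\phi)=(\phi\times\phi)^{-1}(\Delta_N)$ coisotropic. This is a cleaner, more structural argument that avoids repeating the pointwise calculation; it also illustrates why the two earlier propositions were worth isolating. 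The paper's single-computation approach, on the other hand, makes the ``if and only if'' visibly symmetric and does not require appealing to auxiliary results.
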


\begin{proof}
Note that $R(\phi) = (\phi \times \phi)^{-1} (\Delta_N)$, where
$\Delta_N$ is the diagonal of $N \times N$. Since $\phi$ is surjective submersion $R(\phi)$ is a submanifold of $M \times M$.
Moreover, for $(x,y) \in R(\phi)$
\begin{align*}
 T_{(x,y)} (R(\phi)) = \{(X, Y)\in T_xM \times T_yM | (T\phi)_x (X) = (T\phi)_y (Y)\}.
\end{align*}
Therefore, $T(R(\phi))^0$ consists of covectors $(-(T\phi)_x^*\psi, (T\phi)_y^*\psi)$, where $\psi \in T_{\phi(x)}^*N$.

Thus, $R(\phi)$ be a coisotropic submanifold of $M \times M$ with respect to $\Pi_M \oplus (-1)^{n-1}{\Pi}_M$
if and only if for all $\psi_1, \ldots, \psi_{n-1} \in T_{\phi(x)}^*N$ and $(x, y) \in R(\phi),$ $\Pi_M^{\sharp} \oplus (-1)^{n-1}{\Pi}_M^{\sharp}$ maps 
$$(-(T\phi)_x^*\psi_1, (T\phi)_y^*\psi_1) \wedge \cdots \wedge (-(T\phi)_x^*\psi_{n-1}, (T\phi)_y^*\psi_{n-1})$$ into $T(R(\phi)).$ 
That is
$$(T\phi)_x \Pi_M^{\sharp} (-(T\phi)_x^*\psi_1, \ldots, -(T\phi)_x^*\psi_{n-1} )= (-1)^{n-1} (T\phi)_y \Pi_M^{\sharp} ((T\phi)_y^*\psi_1, \ldots, (T\phi)_y^*\psi_{n-1}),$$
or equivalently,
\begin{align} \label{firsteqn}
 (T\phi)_x \Pi_M^{\sharp} ((T\phi)_x^*\psi_1, \ldots, (T\phi)_x^*\psi_{n-1} ) =  (T\phi)_y \Pi_M^{\sharp} ((T\phi)_y^*\psi_1, \ldots, (T\phi)_y^*\psi_{n-1})
\end{align}
holds. Let $f_1, \ldots ,f_n \in C^\infty(N)$ and $x \in M.$ Then
\begin{align*}
\{\phi^*f_1, \ldots , \phi^*f_n\}_M (x) =& \langle \Pi_M^{\sharp} (d_x (\phi^*f_1) \wedge \cdots \wedge d_x (\phi^*f_{n-1})), d_x (\phi^*f_n) \rangle\\
 =& \langle \Pi_M^{\sharp} \big((T\phi)_x^* \psi_1 \wedge \cdots \wedge (T\phi)_x^* \psi_{n-1}\big), (T\phi)_x^* \psi_n \rangle\\
 =& \langle (T\phi)_x \Pi_M^{\sharp} \big((T\phi)_x^* \psi_1 \wedge \cdots \wedge (T\phi)_x^* \psi_{n-1}\big), \psi_n \rangle
\end{align*}
where $\psi_i = d_{\phi(x)}f_i = d_{\phi(y)}f_i\in T_{\phi(x)}^*N$, for all $1 \leqslant i \leqslant n.$ It follows from the Equation (\ref{firsteqn}) that the function  $\{\phi^*f_1, \ldots , \phi^*f_n\}_M$ is constant along the $\phi$-fibers if and only if $R(\phi)$ is a coisotropic submanifold of $M \times M$ with respect to $\Pi_M^{\sharp} \oplus (-1)^{n-1}{\Pi}_M^{\sharp}$. Hence the result follows by the Remark \ref{rem-coinduced}. The uniqueness follows from the surjectivity of $\phi.$
\end{proof}

\section{Nambu-Lie groupoids}
In this section, we recall the definition of multiplicative multivector fields on Lie groupoid (\cite{pont-geng-xu}) and define Nambu-Lie groupoid (of order $n$) as a Lie groupoid with a multiplicative
$n$-vector field which is also a Nambu-Poisson tensor.

\begin{defn}
Let $G \rightrightarrows M$ be a Lie groupoid and $\Pi \in \mathcal{X}^n(G)$ be an $n$-vector field on $G$. Then $\Pi$ is called {\it multiplicative} if the graph of the groupoid multiplication
\begin{align*}
 \{ (g, h, gh) \in G \times G \times G | ~ \beta(g) = \alpha(h)\}
\end{align*}
is a coisotropic submanifold of $G \times G \times G$ with respect to $\Pi \oplus \Pi \oplus (-1)^{n-1} \Pi$.
\end{defn}
 
Then we have the following characterization of multiplicative multivector fields \cite{pont-geng-xu}:
\begin{thm} \label{multiplicative}
 Let $G \rightrightarrows M$ be a Lie groupoid and $\Pi \in \mathcal{X}^n (G)$ be an $n$-vector field on $G$. Then $\Pi$ is multiplicative if and only if the following conditions are satisfied.
\begin{enumerate}
 \item $\Pi$ is an affine tensor. In other words
\begin{align*}
 \Pi(gh) = (r_{\mathcal{H}})_* \Pi(g) + (l_{\mathcal{G}})_* \Pi(h)  - (r_{\mathcal{H}})_* (l_{\mathcal{G}})_* \Pi(u)
\end{align*}
where $u = \beta(g) = \alpha(h)$ and $\mathcal{G}, \mathcal{H}$ are (local) bisections through the points $g, h$ respectively.
 \item $M$ is a coisotropic submanifold of $G$ with respect to $\Pi$.
 \item For all $g \in G$, $\alpha_* \Pi(g)$ and $\beta_* \Pi(g)$ depend only on the base points $\alpha(g)$ and $\beta(g)$ respectively.
 \item For all $f, f' \in C^\infty(M)$, the $(n-2)$-vector field $\iota_{d(\alpha^*f) \wedge d(\beta^*f')} \Pi$ is zero. In other words,
\begin{align*}
 \{~, \ldots, \alpha^*f, \beta^*f' \} = 0.
\end{align*}
 \item For all $f_1, \ldots, f_k \in C^\infty(M)$, $\iota_{d(\beta^*f_1) \wedge \cdots \wedge d(\beta^*f_k)} \Pi$ is a left invariant $(n-k)$-vector field on $G$, $1 \leqslant k < n.$
\end{enumerate}
\end{thm}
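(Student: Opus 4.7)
The plan is to translate the coisotropic condition on the graph $\Lambda = \{(g,h,gh) : (g,h) \in G_{(2)}\} \subset G \times G \times G$ into manageable algebraic relations by first computing the conormal bundle $(T\Lambda)^0$ explicitly, and then specializing the coisotropic equation
\[
(\Pi \oplus \Pi \oplus (-1)^{n-1}\Pi)^\sharp \bigl({\bigwedge}^{n-1}(T\Lambda)^0\bigr) \subseteq T\Lambda
\]
to extract each of the five conditions. A tangent vector at $(g,h,gh)$ has the form $(X, Y, T_{(g,h)}m(X,Y))$ where $(X,Y) \in T_{(g,h)}G_{(2)}$, i.e.\ $(T\beta)(X) = (T\alpha)(Y)$. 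Thus a covector triple $(\xi, \eta, \zeta) \in T^*_g G \oplus T^*_h G \oplus T^*_{gh} G$ lies in the conormal iff $\xi(X) + \eta(Y) + \zeta(T_{(g,h)}m(X,Y)) = 0$ for every admissible pair. Local bisections $\mathcal{G}$ and $\mathcal{H}$ through $g$ and $h$ let me express $T_{(g,h)}m$ in terms of $(l_\mathcal{G})_*$ and $(r_\mathcal{H})_*$, supplying the dictionary between the abstract coisotropic condition and concrete identities on pushforwards.

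With this dictionary in hand, I would derive the conditions in turn. For (2), evaluating the coisotropic condition at a unit point $(\epsilon_x, \epsilon_x, \epsilon_x)$, where $Tm$ and the conormal simplify dramatically, reduces the equation to $\Pi^\sharp(\bigwedge^{n-1}(TM)^0) \subseteq TM$. For the affine property (1), I would write a general point of $\Lambda$ as $(g, h, gh)$ and, using the bisection-based decomposition, translate the vanishing of $\Pi \oplus \Pi \oplus (-1)^{n-1}\Pi$ on $(n{-}1)$-tuples of conormal vectors into the stated formula for $\Pi(gh)$ in terms of $\Pi(g)$, $\Pi(h)$ and $\Pi(u)$. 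For (3), I would restrict to conormal triples with $\zeta = 0$; such triples are forced, via $\alpha \circ m = \alpha \circ \mathrm{pr}_1$ and $\beta \circ m = \beta \circ \mathrm{pr}_2$ on $G_{(2)}$, to have the form $((T\alpha)^*\psi, -(T\alpha)^*\psi, 0)$ or $((T\beta)^*\psi', -(T\beta)^*\psi', 0)$, which compels $\alpha_* \Pi$ and $\beta_* \Pi$ to descend to $M$. Conditions (4) and (5) emerge by choosing conormal triples built from pullbacks $(T\alpha)^* df$ and $(T\beta)^* df'$: mixing the two types yields the vanishing bracket (4), while using only $\beta$-pullbacks and tracking iterated interior products under the affine formula from (1) gives the left invariance in (5).

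The converse, that (1)–(5) together imply the full coisotropic condition, is proved by a spanning argument for $(T\Lambda)^0$. Using the transversality coming from $\alpha,\beta$ being submersions, a general element of the conormal decomposes into a linear combination of the special types handled above. Condition (1) then accounts for the $\Pi(gh)$ contribution, conditions (3)–(5) ensure that the $\alpha$- and $\beta$-components produced by the pushforwards under $(r_\mathcal{H})_*$ and $(l_\mathcal{G})_*$ cancel, and (2) disposes of the remaining unit-fiber piece. The main obstacle is the bookkeeping for $T_{(g,h)}m$ at a general pair via bisections, together with verifying that the algebraic manipulation of the pushforward formulas truly closes up with the conormal decomposition; the independence of the final identities from the chosen bisections is supplied by Proposition~\ref{left-invariant}.
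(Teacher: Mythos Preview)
The paper does not actually prove this theorem: it is quoted verbatim from \cite{pont-geng-xu} with the attribution ``Then we have the following characterization of multiplicative multivector fields \cite{pont-geng-xu}'', and no argument is given in the text. So there is no in-paper proof to compare your proposal against.

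That said, your outline is broadly in the spirit of how such characterizations are obtained (and is consistent with the strategy in \cite{pont-geng-xu} and, for $n=2$, in \cite{xu}): compute $(T\Lambda)^0$ explicitly in terms of bisections, then read off each condition by feeding in special conormal tuples, and run the converse by a spanning argument. A couple of points deserve more care. First, your description of the conormal vectors with $\zeta=0$ is not quite right: such triples are not forced to be of the pure $\alpha$- or $\beta$-pullback type you list, since $(\xi,\eta,0)$ lies in the conormal iff $\xi$ annihilates $\ker T\beta$ at $g$ and $\eta$ annihilates $\ker T\alpha$ at $h$ with a matching condition on the base; you need the full description to really extract (3) and (4). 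Second, the converse is the subtler direction: showing that (1)--(5) suffice requires checking that the special conormal types you use actually span $(T\Lambda)^0$ at every point and that the cross-terms produced by $(\Pi\oplus\Pi\oplus(-1)^{n-1}\Pi)^\sharp$ on mixed wedges of these generators are controlled; this is where most of the work in \cite{pont-geng-xu} sits, and your sketch underestimates it.
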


\begin{remark}\label{lie-gp-multiplicative}
 Suppose $G$ be a Lie group considered as a Lie groupoid over a point. Then the conditions {\it (3)} - {\it (5)} of the Theorem \ref{multiplicative} are satisfied automatically. The condition {\it (2)} implies that $\Pi (e) = 0$ (where $e$ is the identity element of the group), which together with condition {\it (1)} implies
that $\Pi$ satisfies the usual multiplicativity condition
$$ \Pi (gh) = (r_h)_* \Pi(g) + (l_g)_* \Pi(h) .$$
\end{remark}

\begin{defn}\label{nambu-lie groupoid} 
 A {\it Nambu-Lie groupoid of order $n$} is a Lie groupoid $G \rightrightarrows M$ with a multiplicative Nambu tensor $\Pi \in \mathcal{X}^n(G)$ of order $n.$ 
\end{defn}
A Nambu Lie groupoid (of order $n$) will be denoted
by $(G \rightrightarrows M, \Pi) $.

\begin{exam}\label{exam-nlg}
\begin{enumerate}
\item Poisson groupoids \cite{wein} are examples of Nambu-Lie groupoids with $n=2.$
\item Any Lie groupoid with zero Nambu structure is a Nambu-Lie groupoid.
\item Let $(G, \Pi)$ be a Nambu-Lie group (of order $n$) \cite{vais}. Thus $G$ is a Lie group equipped with  a Nambu structure $\Pi$ of order $n$ on $G$
such that
\begin{align*}
 \Pi(gh) = (r_h)_* \Pi(g) + (l_g)_* \Pi(h)
\end{align*}
for all $g, h \in G$. Note that the right hand side of the above equality is equal to $m_* (\Pi(g), \Pi(h)),$ where $m_* : \bigwedge^n T_{(g,h)} (G \times G) \rightarrow \bigwedge^n T_{gh}G$
is the map induced by the multiplication map $m : G \times G \rightarrow G$. Therefore,
\begin{align*}
 \Pi(gh) = m_* (\Pi(g), \Pi(h)).
\end{align*}
Thus, the group multiplication map $m : G \times G \rightarrow G$ is a $(\Pi \oplus \Pi, \Pi)$-map. Therefore, by the Proposition \ref{nambu map-coiso}, the graph of the group multiplication map is a coisotropic submanifold of $G \times G \times G$ with respect to $\Pi \oplus \Pi \oplus (-1)^{n-1} \Pi$. Hence $(G, \Pi)$ is a Nambu-Lie groupoid over a point.
Conversely, if $(G, \Pi)$ is a Nambu-Lie groupoid over a point, then the group multiplication map $m : G \times G \rightarrow G$ is a 
$(\Pi \oplus \Pi, \Pi)$-map. Hence $(G, \Pi)$ is a Nambu-Lie group in the sense of \cite{vais}. One can also see the equivalence between Nambu-Lie groupoid
over a point and Nambu-Lie group by using Remark \ref{lie-gp-multiplicative}.
\end{enumerate}
\end{exam}

For a Poisson groupoid the following facts are well known \cite{wein}.
\begin{itemize}
\item The groupoid inversion map is a anti-Poisson map. 
\item The Poisson structure on the total space induces a Poisson structure on the base such that the source map is a Poisson map and the target map is a anti-Poisson map. 
\end{itemize}

In the next proposition we generalize the above facts to the Nambu-Poisson setting.

\begin{prop}\label{inverse-basenambu}
 Let $(G \rightrightarrows M , \Pi)$ be a Nambu-Lie groupoid. Then
\begin{enumerate} 
\item The inverse map $i : G \rightarrow G$, $g \mapsto g^{-1}$ is an anti-Nambu Poisson map.
\item There is a unique Nambu-Poisson structure on $M$ which we denote by $\Pi_M$ for which $\alpha$ is a Nambu-Poisson map and $\beta$ is an anti Nambu-Poisson map.
\end{enumerate}
\end{prop}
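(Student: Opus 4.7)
The plan is to establish part~(1) first, and then deduce part~(2) from it using the identity $\beta = \alpha \circ i$ together with condition~(3) of Theorem~\ref{multiplicative}. For part~(1), by Proposition~\ref{nambu map-coiso}, the assertion $i_*\Pi = (-1)^{n-1}\Pi$ is equivalent to the graph $\mathrm{Gr}(i) = \{(g, g^{-1}) : g \in G\}$ being coisotropic in $(G \times G, \Pi \oplus \Pi)$; here the $(-1)^{n-1}$ of Proposition~\ref{nambu map-coiso} and the $(-1)^{n-1}$ coming from the anti-$N$-$P$ convention combine to give $+1$ in the second factor.

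To obtain this coisotropicity, I would realize $\mathrm{Gr}(i)$ as the image under the projection $\pi_{12}: G \times G \times G \to G \times G$ of the submanifold $\Sigma := \{(g, g^{-1}, \epsilon_{\alpha(g)}) : g \in G\} \subset G \times G \times G$, and first show that $\Sigma$ is coisotropic in $(G \times G \times G, \Pi \oplus \Pi \oplus (-1)^{n-1}\Pi)$. Since $\pi_{12}$ is a $(\Pi \oplus \Pi \oplus (-1)^{n-1}\Pi, \Pi \oplus \Pi)$-map (the third-factor summand pushes forward to zero), Proposition~\ref{image-coiso} then transfers the coisotropicity from $\Sigma$ to $\pi_{12}(\Sigma) = \mathrm{Gr}(i)$. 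To handle $\Sigma$ itself I would exploit the identity $\Sigma = \Gamma_m \cap (G \times G \times M)$, where $\Gamma_m$ is coisotropic by the definition of multiplicativity and $G \times G \times M$ is coisotropic because $M \hookrightarrow G$ is coisotropic by condition~(2) of Theorem~\ref{multiplicative}. The intersection is clean, and a direct pointwise conormal computation using the structure of $\Gamma_m$ then yields the coisotropicity of $\Sigma$.

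For part~(2), I would define $\Pi_M$ on $M$ by $\Pi_M(x) := \alpha_*\Pi(g)$ for any $g \in \alpha^{-1}(x)$; this is well-defined by condition~(3) of Theorem~\ref{multiplicative}. By construction $\alpha_*\Pi = \Pi_M$ as $n$-vector fields, which is precisely the condition for $\alpha$ to be a $(\Pi, \Pi_M)$-map. The Fundamental Identity for $\Pi_M$ then follows by pulling back along the injective map $\alpha^*$: both sides of the identity for $\Pi_M$ become, after applying $\alpha^*$, bracket expressions in $\Pi$ on $G$ that coincide by the Fundamental Identity for $\Pi$, and injectivity of $\alpha^*$ (which holds since $\alpha$ is a surjective submersion) concludes the argument. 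Finally, from $\beta = \alpha \circ i$ and part~(1) we obtain $\beta_*\Pi = \alpha_* i_*\Pi = (-1)^{n-1}\alpha_*\Pi = (-1)^{n-1}\Pi_M$, so $\beta$ is anti-Nambu-Poisson; uniqueness of $\Pi_M$ is immediate from the surjectivity of $\alpha$.

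The principal obstacle is establishing coisotropicity of $\Sigma \subset G \times G \times G$: intersections of coisotropic submanifolds in the multivector-field sense are not automatically coisotropic, so no black-box theorem is directly applicable. However, the specific form of $\Gamma_m$ (the graph of a partial smooth operation) together with the product structure of $G \times G \times M$ makes the conormals compatible at each point, and I expect a short direct tangent/conormal calculation using condition~(5) of Theorem~\ref{multiplicative} (left invariance of $\iota_{d(\beta^*f_1) \wedge \cdots \wedge d(\beta^*f_{n-1})}\Pi$) to suffice at this step.
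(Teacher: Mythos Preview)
Your treatment of part~(2) is essentially the same as the paper's: both use condition~(3) of Theorem~\ref{multiplicative} to see that $\alpha_*\Pi(g)$ depends only on $\alpha(g)$, hence $\{\alpha^*f_1,\ldots,\alpha^*f_n\}$ is constant on $\alpha$-fibres, and then invoke Proposition~\ref{coinduced} (you phrase this as a direct definition of $\Pi_M$ plus a check of the Fundamental Identity via injectivity of $\alpha^*$, which amounts to the same thing). The deduction for $\beta$ from $\beta=\alpha\circ i$ and part~(1) is identical.

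For part~(1) the two approaches diverge. The paper does not argue at all: it simply quotes from \cite{pont-geng-xu} that any multiplicative $n$-vector field on a Lie groupoid satisfies $i_*\Pi=(-1)^{n-1}\Pi$, and then observes that $\Pi$ is a Nambu tensor. Your route through coisotropic calculus is conceptually attractive---it stays entirely within the framework of the paper---but the step you flag as the ``principal obstacle'' is a genuine gap, not a routine computation. Writing $(T\Sigma)^0=(T\Gamma_m)^0\oplus\bigl(0\oplus 0\oplus(TM)^0\bigr)$ at a point $(g,g^{-1},\epsilon_u)$ (the intersection is indeed transverse), and decomposing each $\gamma_i=\mu_i+(0,0,\zeta_i)$, one finds that the pure-$\mu$ term vanishes by multiplicativity and the pure-$\zeta$ term by coisotropicity of $M$, but the mixed terms
\[
(-1)^{n-1}\,\Pi_{\epsilon_u}\bigl(\mu_{i_1}^{(3)},\ldots,\mu_{i_k}^{(3)},\zeta_{j_1},\ldots,\zeta_{j_{n-k}}\bigr),\qquad 1\le k\le n-1,
\]
do \emph{not} vanish term by term: the third components $\mu^{(3)}$ of elements of $(T\Gamma_m)^0$ range over all of $T^*_{\epsilon_u}G$ (since $m$ is a submersion), so for instance $\Pi(\mu^{(3)},\zeta_2,\ldots,\zeta_n)=\langle\mu^{(3)},\Pi^\sharp(\zeta_2\wedge\cdots\wedge\zeta_n)\rangle$ with $\Pi^\sharp(\zeta_2\wedge\cdots\wedge\zeta_n)\in TM$ is unconstrained. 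One must therefore show that the full alternating sum of these mixed terms cancels, and this requires more than condition~(5) of Theorem~\ref{multiplicative}; what is really needed is essentially the affine identity (condition~(1)) specialised to $h=g^{-1}$, which is how \cite{pont-geng-xu} proceeds. So your plan for part~(1) is not wrong in spirit, but the tool you name is not the right one, and the missing computation is of the same order of difficulty as the statement itself.
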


\begin{proof}
(1) It is proved in \cite{pont-geng-xu} that given a Lie groupoid $G \rightrightarrows M$ with multiplicative $n$-vector field $\Pi \in \mathcal{X}^n(G)$,
the groupoid inversion map $i : G \rightarrow G$ satisfies
\begin{align*} 
 i_* \Pi = (-1)^{n-1} \Pi.
\end{align*}
Hence the result follows as $\Pi$ is a Nambu tensor.

(2) Let $f_1, \ldots, f_n \in C^\infty(M)$ be any functions on $M$. Then for any $g \in G$, we have
\begin{align*}
\{\alpha^* f_1, \ldots, \alpha^* f_n \} (g) =& \Pi (g) (d_g (\alpha^* f_1), \ldots, d_g (\alpha^* f_n))\\
 =& \Pi(g) (\alpha^* (d_{\alpha(g)} f_1), \ldots, \alpha^* (d_{\alpha(g)} f_n))\\
 =& \alpha_* \Pi (g) (d_{\alpha(g)} f_1, \ldots, d_{\alpha(g)} f_n).
\end{align*}
Since $\alpha_* \Pi(g)$ depends only on the value of $\alpha(g)$, it follows that the function $\{ \alpha^* f_1, \ldots, \alpha^* f_n \}$ is constant on the $\alpha$-fibers. Therefore, by the Proposition \ref{coinduced}, there exists a Nambu-Poisson structure $\Pi_M$ with the induced bracket denoted by $\{ ~, \ldots, ~\}_M,$ on $M$
for which $\alpha$ is a Nambu-Poisson map. Since $\beta = \alpha \circ i$ and $i$ is anti Nambu-Poisson, therefore $\beta$
is an anti Nambu-Poisson map.
\end{proof}

\begin{remark}
Consider the map $(\alpha,\beta): G \rightarrow M \times M.$
Since we have $\alpha_{*} \Pi = \Pi_M$ and $\beta_{*}\Pi = (-1)^{n-1} \Pi_M$, using property {\it (4)} of the Theorem \ref{multiplicative} we obtain 
\begin{align*}
(\alpha,\beta)_{*} \Pi = \alpha_* \Pi \oplus \beta_* \Pi = \Pi_M \oplus (-1)^{n-1} \Pi_M.
\end{align*}
\end{remark}

\begin{prop}
 Let $(G \rightrightarrows M, \Pi)$ be a Nambu-Lie groupoid. If the orbit space $M/\sim$ is a smooth manifold, then $M/\sim$ carries a Nambu-Poisson structure such that the projection $q: M \rightarrow M/\sim $ is a Nambu-Poisson map.
\end{prop}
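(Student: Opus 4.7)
The plan is to deduce the result from Proposition \ref{coinduced} applied to the smooth surjective map $q: M \to M/\sim$ and the Nambu-Poisson manifold $(M,\Pi_M)$ provided by Proposition \ref{inverse-basenambu}. By that proposition, it suffices to show that for every $f_1,\ldots,f_n \in C^\infty(M/\sim)$, the function
\[
H := \{q^{*}f_1,\ldots,q^{*}f_n\}_M
\]
is constant along the fibres of $q$.

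First I would translate fibre-constancy into a statement on $G$. Since $x \sim y$ iff there exists $g \in G$ with $\alpha(g)=x$ and $\beta(g)=y$, a function $F \in C^\infty(M)$ is constant on orbits if and only if $\alpha^{*}F = \beta^{*}F$ on $G$. Setting $F_i := q^{*}f_i$, this gives $\alpha^{*}F_i = \beta^{*}F_i$ for every $i$.

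Next, using Proposition \ref{inverse-basenambu} --- which asserts that $\alpha$ is Nambu-Poisson and $\beta$ is anti Nambu-Poisson with respect to $\Pi_M$ --- I would compute
\begin{align*}
\alpha^{*} H &= \{\alpha^{*}F_1,\ldots,\alpha^{*}F_n\}_G,\\
\beta^{*} H &= (-1)^{n-1}\{\beta^{*}F_1,\ldots,\beta^{*}F_n\}_G.
\end{align*}
Substituting $\alpha^{*}F_i = \beta^{*}F_i$ into the right-hand sides, the two brackets on $G$ coincide, yielding the key identity $\alpha^{*}H = (-1)^{n-1}\beta^{*}H$ on $G$.

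The conclusion then splits by parity. When $n$ is odd, this is already $\alpha^{*}H = \beta^{*}H$, so $H$ is constant on orbits. When $n$ is even, the identity becomes $\alpha^{*}H = -\beta^{*}H$; evaluating at a unit $\epsilon_x$ (where $\alpha(\epsilon_x) = \beta(\epsilon_x) = x$) yields $H(x) = -H(x)$, so $H \equiv 0$ on $M$ and is trivially constant on fibres. In either case the hypothesis of Proposition \ref{coinduced} is satisfied, giving a (unique) Nambu-Poisson structure on $M/\sim$ for which $q$ is a Nambu-Poisson map. The main delicate point is the sign bookkeeping of $(-1)^{n-1}$ rather than a genuine obstacle; a noteworthy consequence is that the coinduced structure is forced to be the zero tensor whenever $n$ is even.
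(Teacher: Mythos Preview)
Your argument is correct and takes a genuinely different route from the paper's proof. The paper works through the coisotropic calculus of Section~3: it identifies $R(q)=\{(x,y):q(x)=q(y)\}$ with the image $(\alpha,\beta)(G)\subset M\times M$, uses the Remark after Proposition~\ref{inverse-basenambu} to get $(\alpha,\beta)_*\Pi=\Pi_M\oplus(-1)^{n-1}\Pi_M$, applies Proposition~\ref{image-coiso} to conclude that $R(q)$ is coisotropic, and then invokes Proposition~\ref{coinduced-coiso}. You instead bypass the coisotropic machinery entirely and verify the fibre-constancy hypothesis of Proposition~\ref{coinduced} by a direct computation with $\alpha^*$ and $\beta^*$, exploiting only that $\alpha$ is Nambu--Poisson, $\beta$ is anti Nambu--Poisson, and that orbit-invariant functions satisfy $\alpha^*F=\beta^*F$. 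The paper's route illustrates why the coisotropic results of Section~3 were developed; your route is more elementary, needs no submanifold hypothesis on $(\alpha,\beta)(G)$, and yields as a bonus the observation---not visible in the paper's argument---that the coinduced Nambu structure on $M/\!\sim$ is necessarily zero when $n$ is even.
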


\begin{proof}
Let $\Pi_M$ be the induced Nambu structure on the base $M$. For the projection map $q: M \rightarrow M/\sim $, we have
\begin{align*}
 R(q) =& \{(x,y) \in M \times M| q(x) = q(y)\}\\
=& \{(\alpha(g), \beta(g))| g \in G\}\\
=& (\alpha,\beta)(G).
\end{align*}
Consider $G$ as a coisotropic submanifold of $G$ with respect to $\Pi$ and also consider the map $(\alpha, \beta) : G \rightarrow M \times M$. By the above Remark we have $(\alpha, \beta)_* \Pi = \Pi_M \oplus (-1)^{n-1} \Pi_M.$ Therefore, by the Proposition \ref{image-coiso},  $R(q) = (\alpha,\beta)(G)$ is a coisotropic submanifold of $M \times M$ with respect to $\Pi_M \oplus (-1)^{n-1}\Pi_M.$ Hence the result follows from the Proposition \ref{coinduced-coiso}.
\end{proof}

\section{Infinitesimal form of Nambu-Lie groupoid}

The aim of this section, is to  study the infinitesimal form of a
Nambu-Lie groupoid. We show that if $(G \rightrightarrows M, \Pi)$ is a Nambu-Lie groupoid of order $n$ with Lie algebroid $AG \rightarrow M$, then
$(AG, A^*G)$ forms a weak Lie-Filippov bialgebroid of order $n$ introduced in \cite{bas-bas-das-muk}. Before proceeding further, let us briefly recall from \cite{bas-bas-das-muk} the notion of a weak Lie-Filippov bialgebroid.

Lie bialgebroids are  generalization of both Poisson manifolds and Lie bialgebras. Recall that a Lie bialgebroid, introduced by Mackenzie and Xu \cite{mac-xu} is also the infinitesimal form of a Poisson groupoid. It is defined as a pair $(A, A^\ast)$ of Lie algebroids in duality, where the Lie bracket of $A$ satisfies the following compatibility condition expressed in terms of the differential $d_\ast$ on $\Gamma (\bigwedge^\bullet A)$ 
$$d_\ast[X, Y] = [d_\ast X, Y] + [X , d_\ast Y],$$ 
for all $X,~ Y \in \Gamma A.$ 

We note that if $M$ is a Poisson manifold then the Lie algebroid structures on $TM$ and $T^\ast M$ form a Lie bialgebroid. On the other hand, it is well known \cite{kosmann,mac-xu} that if $(A, A^\ast)$ is a Lie bialgebroid over a smooth manifold $M$ then there is a canonical Poisson structure on the base manifold $M$.

Thus it is natural to ask the following question which was posed in \cite{bas-bas-das-muk}:\\ 
{\it Does there exist some notion of bialgebroid associated to a Nambu-Poisson manifold of order $n > 2$ }? 

To answer this question, the authors \cite{bas-bas-das-muk} introduced the notion of weak Lie-Filippov bialgebroid.

It is well known \cite{gra-mar, vais} that for a Nambu-Poisson manifold $M$ of order $n \geqslant 2$, the space $\Omega^1(M)$ of $1$-forms admits an $n$-ary bracket, called {\it Nambu-form bracket}, such that the bracket satisfies almost all the properties of an $n$-Lie algebra (also known as Filippov algebra of order $n$) bracket \cite{fil} except that the Fundamental identity is satisfied only in a restricted sense as described below.

Let $(M, \{~, \ldots, ~\})$ be a Nambu-Poisson manifold of order $n$ with associated Nambu-Poisson tensor $\Pi$. Then one can define the Nambu form-bracket on the space of $1$-forms
\begin{align*}
 [~, \ldots, ~]: \Omega^1(M) \times  \cdots \times \Omega^1(M) \rightarrow \Omega^1(M)
\end{align*}
 by the following
\begin{align*}
 [\alpha_1, \ldots, \alpha_n] = \sum_{k=1}^n (-1)^{n-k} \mathcal{L}_{\Pi^{\sharp}(\alpha_1 \wedge\cdots \wedge \hat{\alpha}_k \wedge\cdots  \wedge \alpha_n)} \alpha_k - (n-1) d (\Pi(\alpha_1, \ldots, \alpha_n))
\end{align*}
\begin{align}\label{nambu-bracket}
\hspace*{1cm} = d (\Pi(\alpha_1, \ldots, \alpha_n)) + \sum_{k=1}^n (-1)^{n-k} \iota_{\Pi^{\sharp}(\alpha_1 \wedge\cdots \wedge \hat{\alpha}_k \wedge\cdots \wedge \alpha_n)} d\alpha_k
\end{align}
for $\alpha_i \in \Omega^1(M), i=1,\ldots ,n.$ Here $\hat{\alpha}_k$ in a monomial $\alpha_1 \wedge \cdots \wedge \hat{\alpha}_k \wedge \cdots \wedge \alpha_n$ means that the symbol $\alpha_k$ is missing in the monomial. The above bracket satisfies the following properties (\cite{vais}).
\begin{enumerate} 
\item The bracket is skew-symmetric.
\item $[df_1, \ldots, df_n] = d \{f_1, \ldots, f_n \}$.
\item $[\alpha_1,\ldots, \alpha_{n-1}, f \alpha_n] = f [\alpha_1,\ldots, \alpha_{n-1},  \alpha_n] + \Pi ^{\sharp} (\alpha_1 \wedge\cdots \wedge \alpha_{n-1})(f) \alpha_n$.
\item The bracket satisfies the Fundamental identity
\begin{align*}
 [\alpha_1, \ldots, \alpha_{n-1}, [\beta_1, \ldots, \beta_n]] = \sum_{k=1}^n [\beta_1, \ldots , \beta_{k-1},[\alpha_1, \ldots, \alpha_{n-1}, \beta_k], \ldots, \beta_n]
\end{align*}
whenever the $1$-forms $\alpha_i \in \Omega^1(M)$ are closed, $1 \leqslant i \leqslant n-1$ and for any $\beta_j$.
\item $[\Pi^{\sharp}(\alpha_1 \wedge \cdots \wedge \alpha_{n-1}), \Pi^{\sharp}(\beta_1 \wedge \cdots \wedge \beta_{n-1})]\\ 
= \sum_{k=1}^{n-1} \Pi^{\sharp} (\beta_1 \wedge\cdots \wedge [\alpha_1,\ldots , \alpha_{n-1}, \beta_k] \wedge \cdots \wedge \beta_{n-1})$\\
for closed $1$-forms $\alpha_i \in \Omega^1 (M)$ and for any $1$-forms $\beta_j$. 
\end{enumerate}

The Nambu-form bracket on  $\Omega^1(M)$, together with the usual Lie algebroid structure on $TM$ yields an example of a notion called a {\it weak Lie-Filippov algebroid pair of order $n$}, $n>2,$ on a smooth vector bundle (cf. Definition $5.5$, \cite{bas-bas-das-muk}). 

In order to classify such structures, the authors formulate a notion of {\it Nambu-Gerstenhaber algebra of order $n$}. It turns out, weak-Lie-Filippov algebroid pair structures of order $n$, $n>2$, on a smooth vector bundle $A$ over $M$, are in bijective correspondence with Nambu-Gerstenhaber brackets of order $n$ on the graded commutative, associative algebra $\Gamma \bigwedge^\bullet A^*$ of multisections of $A^*$, where $A^*$ is the dual bundle (cf. Definition $5.7$, Theorem $5.8$, \cite{bas-bas-das-muk}).

Moreover, for a Nambu-Poisson manifold $M$ of order $n > 2$, the Nambu-Gerstenhaber bracket on $\Omega^\bullet(M)$, extending the Nambu-form bracket on $\Omega^1(M)$ satisfies certain suitable compatibility condition similar to the compatibility condition of a Lie bialgebroid. This motivates the authors to introduce the notion of a {\it weak Lie-Filippov bialgebroid structure} of order $n$ on a smooth vector bundle.

\begin{defn}\label{lie-fill-defn}
A {\it weak Lie-Filippov bialgebroid of order $n>2$} over a smooth manifold $M$ consists of a pair $(A, A^*)$, where $A$ is a smooth vector bundle over $M$ with dual bundle $A^*$ satisfying the following properties:
\begin{enumerate}
\item   $A$ is a Lie algebroid with $d_A$ being the differential of the Lie algebroid cohomology of $A$ with trivial representation;
\item the space of smooth sections $\Gamma A^*$ admits a skew-symmetric $n$-ary bracket
$$[~, \ldots ,~]: {\Gamma A^* \times \cdots \times \Gamma A^*} \longrightarrow \Gamma A^*$$
satisfying 
$$[\alpha_1, \ldots , \alpha_{n-1}, [\beta_1, \ldots , \beta_n]] = \sum_{k=1}^n [\beta_1, \ldots , \beta_{k-1}, [\alpha_1, \ldots , \alpha_{n-1}, \beta_k], \ldots ,\beta_n]$$
for all $d_A$-closed sections $\alpha_i \in \Gamma A^*,~ 1 \leqslant i \leqslant n-1$ and for any sections $\beta_j \in \Gamma A^*,~ 1\leqslant j\leqslant n;$
\item there exists a vector bundle map $\rho : \bigwedge^{n-1}A^* \longrightarrow TM$, called the {\it anchor} of the pair $(A, A^*)$, such that the identity
$$ [\rho (\alpha_1 \wedge \cdots \wedge \alpha_{n-1}), \rho (\beta_1 \wedge \cdots \wedge \beta_{n-1})] = \sum_{k=1}^{n-1}\rho (\beta_1 \wedge \cdots \wedge [\alpha_1, \ldots , \alpha_{n-1}, \beta_k] \wedge \cdots \wedge \beta_{n-1})$$
holds for all $d_A$-closed sections $\alpha_i \in \Gamma A^*,~ 1\leqslant i \leqslant n-1$ and for any sections $\beta_j \in \Gamma A^*,~ 1\leqslant j\leqslant n-1;$
\item for all sections $\alpha_i \in \Gamma A^*,~ 1\leqslant i \leqslant n$ and any $f \in C^\infty(M)$,
$$[\alpha_1, \ldots , \alpha_{n-1}, f\alpha_n] = f [\alpha_1, \ldots , \alpha_{n-1}, \alpha_n] + \rho (\alpha_1 \wedge \cdots \wedge \alpha_{n-1})(f)\alpha_n$$ holds;
\item the following compatibility condition holds:
$$ d_A[\alpha_1, \ldots , \alpha_n] = \sum_{k=1}^n [\alpha_1, \ldots , d_A\alpha_k, \ldots , \alpha_n],$$ for any $\alpha_i \in \Gamma A^\ast,$ $1 \leqslant i \leqslant n$,  where the bracket $[~, \ldots , ~]$ on the right hand side is the graded extension of the bracket on  $\Gamma A^\ast$.
\end{enumerate}
\end{defn}

A weak Lie-Filippov bialgebroid (of order $n$) over $M$ is denoted by $(A, A^*)$ when all the structures are understood. A Lie bialgebroid is a Lie-Filippov
bialgebroid of order $2$ such that the conditions (2) and (3) of the above definition has no restriction on $\alpha$.

In \cite{bas-bas-das-muk}, the authors have shown that for a Nambu-Poisson manifold $M$ of order $n > 2$, the pair $(TM, T^*M)$ is a weak Lie-Filippov bialgebroid of order $n$ (cf. Corollary $6.3$, \cite{bas-bas-das-muk}). It is also proved that if $(G, \Pi)$ is a Nambu-Lie group \cite{vais} of order $n$ with its Lie algebra $\mathfrak{g},$ then $(\mathfrak{g}, \mathfrak{g}^*)$ forms a (weak) Lie-Filippov bialgebroid of order $n$ over a Point.

It is known that the base of a Lie bialgebroid carries a natural Poisson structure. In \cite{bas-bas-das-muk} it has been extended to the Nambu-Poisson set up.

\begin{prop} \label{wlfb-base-nambu}(\cite{bas-bas-das-muk})
Let $(A, A^*)$ be a weak Lie-Filippov bialgebroid (of order n) over $M$. Then the bracket
\begin{align*}
 \{f_1, \ldots, f_n\}_{(A, A^*)} := \rho (d_A f_1 \wedge \cdots \wedge d_A f_{n-1}) f_n
\end{align*}
defines a Nambu-Poisson structure of order n on $M$.
\end{prop}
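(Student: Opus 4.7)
The plan is to verify that the bracket $\{f_1,\ldots,f_n\}_{(A,A^*)} := \rho(d_A f_1 \wedge \cdots \wedge d_A f_{n-1})(f_n)$ satisfies the three defining properties of a Nambu-Poisson structure of order $n$: the Leibniz rule, skew-symmetry in all $n$ arguments, and the Fundamental identity. Throughout I will write
$$X_{f_1\ldots f_{n-1}} := \rho(d_A f_1 \wedge \cdots \wedge d_A f_{n-1}) \in \mathcal{X}^1(M),$$
so that $\{f_1,\ldots,f_n\}_{(A,A^*)} = X_{f_1\ldots f_{n-1}}(f_n)$.

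I would begin with the Leibniz rule. In the last slot this is immediate, since evaluation of the vector field $X_{f_1\ldots f_{n-1}}$ on a function is automatically a derivation. In each of the first $n-1$ slots it follows from $d_A$ being a derivation on $C^\infty(M)$ (a consequence of axiom (1)) together with the $C^\infty(M)$-linearity of $\rho$. Skew-symmetry in the first $n-1$ slots is manifest from the wedge. The more delicate ingredient is skew-symmetry across the form/function boundary; for this I would identify $\{f_1,\ldots,f_n\}_{(A,A^*)}$ with the Nambu-Gerstenhaber bracket $[d_A f_1,\ldots,d_A f_{n-1},f_n]$ on the exterior algebra $\Gamma \bigwedge^\bullet A^*$ developed in \cite{bas-bas-das-muk}. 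Axiom (4), together with the extension of the bracket to functions as a graded derivation, forces this to agree with $X_{f_1\ldots f_{n-1}}(f_n)$, and the total graded antisymmetry of the Nambu-Gerstenhaber bracket then delivers the remaining skew-symmetry.

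For the Fundamental identity, set $X:=X_{f_1\ldots f_{n-1}}$ and $Y:=\rho(d_A g_1 \wedge \cdots \wedge d_A g_{n-1})$. A direct expansion shows that
$$\{f_1,\ldots,f_{n-1},\{g_1,\ldots,g_n\}_{(A,A^*)}\}_{(A,A^*)} - \sum_{k=1}^n \{g_1,\ldots,\{f_1,\ldots,f_{n-1},g_k\}_{(A,A^*)},\ldots,g_n\}_{(A,A^*)}$$
equals
$$[X,Y](g_n) - \sum_{k=1}^{n-1} \rho\bigl(d_A g_1 \wedge \cdots \wedge d_A\{f_1,\ldots,f_{n-1},g_k\}_{(A,A^*)} \wedge \cdots \wedge d_A g_{n-1}\bigr)(g_n).$$
The Lie bracket $[X,Y]$ is computed by axiom (3), which applies because the 1-forms $d_A f_i$ are automatically $d_A$-closed ($d_A^2 = 0$), giving
$$[X,Y] = \sum_{k=1}^{n-1}\rho\bigl(d_A g_1 \wedge \cdots \wedge [d_A f_1,\ldots,d_A f_{n-1}, d_A g_k] \wedge \cdots \wedge d_A g_{n-1}\bigr).$$
Substituting, the Fundamental identity reduces to the single relation
$$[d_A f_1,\ldots,d_A f_{n-1},d_A h] = d_A\{f_1,\ldots,f_{n-1},h\}_{(A,A^*)} \quad \text{for all } h\in C^\infty(M).$$
I would verify this by applying axiom (5) with $n-1$ entries $d_A f_i$ (closed) and the last entry the function $h$: since $d_A^2 f_i = 0$, all terms on the right of the derivation formula in axiom (5) vanish except the one with $d_A$ hitting $h$, yielding $d_A[d_A f_1,\ldots,d_A f_{n-1},h] = [d_A f_1,\ldots,d_A f_{n-1},d_A h]$, and the left side is $d_A\{f_1,\ldots,f_{n-1},h\}_{(A,A^*)}$ by the identification used above.

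The step I expect to be the main obstacle is precisely the extension of axioms (4)–(5) from purely 1-form arguments to a single function argument in the last slot via the Nambu-Gerstenhaber structure. Both the cross-slot skew-symmetry and the key identity $[d_A f_1,\ldots,d_A f_{n-1},d_A h] = d_A\{f_1,\ldots,f_{n-1},h\}_{(A,A^*)}$ depend on this extension; verifying that it is well defined and compatible with $d_A$ is where the bookkeeping concentrates. Once it is in hand, the remaining computations (Leibniz, pure wedge skew-symmetry, and the substitution into axiom (3)) are routine.
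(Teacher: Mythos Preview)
The paper does not supply its own proof of this proposition; it is quoted from \cite{bas-bas-das-muk} without argument, so there is no in-paper proof to compare against.

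Your outline is the natural one and matches how such results are established. Leibniz in the last slot is free, skew-symmetry in the first $n-1$ slots comes from the wedge, and you correctly isolate the two nontrivial points: the cross-slot skew-symmetry and the identity $[d_Af_1,\ldots,d_Af_{n-1},d_Ah]=d_A\{f_1,\ldots,f_{n-1},h\}_{(A,A^*)}$. Both indeed rest on extending the bracket of axioms (4)--(5) to allow a degree-$0$ entry, i.e.\ on the Nambu--Gerstenhaber structure on $\Gamma\bigwedge^\bullet A^*$ built in \cite{bas-bas-das-muk}; once that extension is granted, axiom (3) with the automatically $d_A$-closed forms $\alpha_i=d_Af_i$ finishes the Fundamental identity exactly as you wrote. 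So your plan is sound, and the substantive work you flag as the obstacle is precisely the content developed in the cited reference.
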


It is known that, given a coisotropic submanifold $C$ of a Poisson manifold $M$, the conormal bundle $(TC)^0 \rightarrow C$ is a Lie subalgebroid
of the cotangent Lie algebroid $T^*M$ \cite{wein}. If $M$ is a Nambu-Poisson manifold of order $n$ ($n \geqslant 3$), the cotangent bundle $T^*M$ is not a Filippov
algebroid. However we have the following useful result.

\begin{prop}\label{coiso-n-bracket}
 Let $C$ be a closed embedded coisotropic submanifold of a Nambu-Poisson manifold $(M, \Pi)$ of order $n$. Then
 \begin{enumerate}
\item the bundle map $\Pi^{\sharp} : \bigwedge^{n-1}T^*M \rightarrow TM$ maps $\bigwedge^{n-1} (TC)^0$ to $TC$;
\item the Nambu-form bracket on the space of $1$-forms $\Omega^1(M)$ can be restricted to the sections of the conormal bundle $ (TC)^0 \rightarrow C$.
\end{enumerate}
\end{prop}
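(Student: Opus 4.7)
For part (1), the plan is to use the standard characterization that a vector $v \in T_xM$ lies in $T_xC$ if and only if $\beta(v) = 0$ for every $\beta \in (TC)^0_x$. Thus, to verify $\Pi^\sharp(\alpha_1 \wedge \cdots \wedge \alpha_{n-1}) \in T_xC$ for $\alpha_1, \ldots, \alpha_{n-1} \in (TC)^0_x$, I would simply pair it against an arbitrary $\beta \in (TC)^0_x$, obtaining $\langle \beta, \Pi^\sharp(\alpha_1 \wedge \cdots \wedge \alpha_{n-1}) \rangle = \Pi_x(\alpha_1, \ldots, \alpha_{n-1}, \beta)$, which is zero by the defining property of a coisotropic submanifold.

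For part (2), I would first show that any section of $(TC)^0 \to C$ admits, in a neighborhood of any point of $C$, an extension to a $1$-form on $M$ of the form $\sum_j g_j\, df_j$ with $f_j \in \mathcal{I}(C)$. This is immediate in adapted coordinates $(x^1, \ldots, x^k, y^1, \ldots, y^{m-k})$ with $C = \{y^1 = \cdots = y^{m-k} = 0\}$, since $(TC)^0$ is locally spanned by $dy^1, \ldots, dy^{m-k}$ along $C$ and each $y^j$ lies in $\mathcal{I}(C)$.

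I would then expand $[\alpha_1, \ldots, \alpha_n]$ using multilinearity and the Leibniz rule (property (3) of the Nambu-form bracket listed before the statement), reducing to the case $\alpha_i = df_i$ with $f_i \in \mathcal{I}(C)$. By property (2), $[df_1, \ldots, df_n] = d\{f_1, \ldots, f_n\}$, and since $C$ is coisotropic the earlier proposition on vanishing ideals gives $\{f_1, \ldots, f_n\} \in \mathcal{I}(C)$, so $d\{f_1, \ldots, f_n\}|_C$ is a section of $(TC)^0$. Every Leibniz correction term is a scalar multiple of some $df_j$ with $f_j \in \mathcal{I}(C)$, and hence also lies in $(TC)^0$ along $C$.

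It remains to check that the restriction to $C$ is independent of the chosen extensions. This reduces by multilinearity to showing that replacing one slot $\tilde\alpha_i$ by a $1$-form $\beta$ vanishing pointwise on $C$ yields a bracket whose restriction to $C$ is zero. Writing $\beta = \sum_l h_l \omega_l$ locally with $h_l \in \mathcal{I}(C)$ and applying Leibniz once more, the term $h_l [\cdots, \omega_l]$ vanishes on $C$ because $h_l$ does, while the term $\Pi^\sharp(\tilde\alpha_1 \wedge \cdots \wedge \tilde\alpha_{n-1})(h_l)\, \omega_l$ vanishes along $C$ because, by part (1), the vector field $\Pi^\sharp(\tilde\alpha_1 \wedge \cdots \wedge \tilde\alpha_{n-1})$ is tangent to $C$ at points of $C$, and $h_l$ is constantly zero along $C$. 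The only real obstacle is bookkeeping through the Leibniz expansion; conceptually everything rests on part (1) combined with the ideal characterization of coisotropy.
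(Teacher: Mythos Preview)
Your argument for part (1) is exactly what the paper does (the paper simply says it ``follows from the definition of coisotropic submanifold'').

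For part (2), your approach is correct but genuinely different from the paper's. The paper pairs $[\alpha_1,\ldots,\alpha_n]$ directly against an arbitrary vector field $X$ tangent to $C$ and uses the explicit formula
\[
[\alpha_1,\ldots,\alpha_n]=\sum_{k=1}^n(-1)^{n-k}\mathcal{L}_{\Pi^\sharp(\alpha_1\wedge\cdots\wedge\widehat{\alpha}_k\wedge\cdots\wedge\alpha_n)}\alpha_k-(n-1)\,d\bigl(\Pi(\alpha_1,\ldots,\alpha_n)\bigr),
\]
showing each term annihilates $X$ along $C$: the Lie-derivative terms via the identity $\langle\mathcal{L}_Y\alpha_k,X\rangle=Y\langle\alpha_k,X\rangle-\langle\alpha_k,[Y,X]\rangle$ together with part (1), and the $d(\Pi(\cdots))$ term because $\Pi(\alpha_1,\ldots,\alpha_n)$ vanishes on $C$. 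You instead work in adapted coordinates, decompose each $\alpha_i$ as $\sum g_j\,df_j$ with $f_j\in\mathcal{I}(C)$, and reduce via skew-symmetry and the Leibniz property to the exact case $[df_1,\ldots,df_n]=d\{f_1,\ldots,f_n\}$. The paper's route is coordinate-free and appeals only to the defining formula, while yours is more modular (it uses only the listed algebraic properties (1)--(3) and the ideal characterization of coisotropy) at the cost of a local decomposition and iterated Leibniz bookkeeping. Your treatment of extension-independence is also more explicit than the paper's, which simply asserts it can be checked.
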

\begin{proof}
The assertion $(1)$ follows from the definition of coisotropic submanifold. To prove $(2)$, let $\alpha_1, \ldots ,\alpha_n \in \Gamma (TC)^0.$ We extend them to $1$-forms on $M$, which we  denote by the same notation. Let $X \in \mathcal{X}^1(M)$ be such that $X\big|_C$ is tangent to $C$. From the definition
of Nambu-form bracket on $1$-forms, we have
\begin{align*}
\langle [\alpha_1, \ldots, \alpha_n], X \rangle = \sum_{k=1}^n (-1)^{n-k} \langle \mathcal{L}_{\Pi^{\sharp}(\alpha_1 \wedge \cdots \wedge \widehat{\alpha}_k \wedge \cdots \wedge \alpha_n)} \alpha_k, X \rangle
- (n-1) \langle d (\Pi(\alpha_1,\ldots, \alpha_n)), X \rangle. 
\end{align*}
Observe that
\begin{align*}
 \langle \mathcal{L}_{\Pi^{\sharp}(\alpha_1 \wedge \cdots \wedge \widehat{\alpha}_k \wedge \cdots \wedge  \alpha_n)}  \alpha_k, X \rangle 
=& \mathcal{L}_{ \Pi^\sharp (\alpha_1 \wedge \cdots \wedge \widehat{\alpha}_k \wedge \cdots \wedge \alpha_n )} \langle \alpha_k, X \rangle \\
&- \langle \alpha_k, [\Pi^{\sharp}(\alpha_1 \wedge \cdots \wedge \widehat{\alpha}_k \wedge \cdots \wedge \alpha_n), X]\rangle .
\end{align*}
This is zero on $C$, because, 
\begin{itemize}
\item $\langle \alpha_k, X\rangle$ is zero on $C$; 
\item $\Pi^{\sharp}(\alpha_1 \wedge \cdots \wedge \widehat{\alpha_k} \wedge \cdots \wedge \alpha_n)$ and $X$ are both tangent to $C$ and hence their Lie bracket is also tangent to C. Thus its pairing with $\alpha_k$ vanish on $C$.
\end{itemize}
Note that $\Pi^{\sharp}(\alpha_1 \wedge \cdots \wedge \alpha_{n-1})\big|_C$ is tangent to $C$ and $\alpha_n \big|_C \in (TC)^0.$ As a consequence, the function
\begin{align*}
\Pi (\alpha_1,\ldots, \alpha_n) = \langle \alpha_n, \Pi^{\sharp}(\alpha_1 \wedge \cdots \wedge \alpha_{n-1})\rangle
\end{align*}
is zero on $C.$  Therefore, the differential
$d (\Pi(\alpha_1, \ldots, \alpha_n))$ restricted to $C$ is in $(TC)^0$, which in turn implies that the second term of the right hand side also vanish on $C$. Hence
\begin{align*}
 [\alpha_1, \ldots, \alpha_n]\big|_C \in (TC)^0.
\end{align*}
One can check that the restriction to $C$ does not depend on the chosen extension. Hence it defines a bracket on the sections of the conormal bundle $(TC)^0 \rightarrow C$.
\end{proof}

\begin{remark}
\begin{enumerate}
\item Let $m_0 \in M$ such that $\Pi (m_0) = 0$. Then $\{m_0\}$ is a coisotropic submanifold of $M$. In this case, the conormal structure becomes $T_{m_0}^*M$, which is a Filippov algebra.
\item The Nambu structure of a Nambu-Lie group G vanishes at the identity element and therefore the dual $\mathfrak{g}^*$ of the Lie algebra $\mathfrak{g}$ of G has a Filippov algebra structure \cite{vais}.
\end{enumerate}
\end{remark}

\begin{remark}\label{nlg-bracket-anchor}
Let $(G \rightrightarrows M, \Pi)$ be a Nambu-Lie groupoid of order $n$ with Lie algebroid $AG \rightarrow M$. By the Proposition \ref{coiso-n-bracket}, we see that the space of sections of the conormal bundle $ A^*G = (TM)^0 \rightarrow M$ admits a skew-symmetric $n$-bracket $[~,\ldots,~]$ and there exists a bundle map
$$\rho := \Pi^{\sharp}\big|_{{\bigwedge}^{n-1} (TM)^0} : {\bigwedge}^{n-1} A^*G = {\bigwedge}^{n-1} (TM)^0 \rightarrow TM ,$$ as $M$ is a coisotropic submanifold of $G$.
\end{remark}

Let $(G \rightrightarrows M, \Pi)$ be a Nambu-Lie groupoid of order $n$ with Lie algebroid $AG \rightarrow M$.
 Let $f \in C^\infty(M).$ Then by part ($5$) of the Theorem \ref{multiplicative}, $\iota_{d(\beta^*f)} \Pi$ is a left invariant $(n-1)$-vector field
on $G$. Therefore, there exists an $(n-1)$-multisection $\delta^0_\Pi (f)\in \Gamma{\bigwedge^{n-1}AG}$ of the Lie algebroid $AG$ such that
\begin{align*}
 \iota_{d(\beta^*f)} \Pi = \overleftarrow{\delta^0_\Pi (f)} .
\end{align*}
Then we have the following result.

\begin{prop}
 Let $(G \rightrightarrows M, \Pi)$ be a Nambu-Lie groupoid of order $n$ and $AG \rightarrow M$ be its Lie algebroid.  Then for any $ X \in \Gamma AG$,
\begin{align*}
 \mathcal{L}_{\overleftarrow{X}} \Pi := [\overleftarrow{X}, \Pi]
\end{align*}
is a left invariant $n$ vector field on $G$, where $\overleftarrow{X}$ is the left invariant vector field on $G$ corresponding to $X$. Moreover $\mathcal{L}_{\overleftarrow{X}} \Pi$ corresponds to the $n$-multisection $- \delta^1_\Pi (X) \in \Gamma{\bigwedge^n AG}$,
that is,
\begin{align*}
 \mathcal{L}_{\overleftarrow{X}} \Pi = -\overleftarrow{\delta^1_\Pi (X)}
\end{align*}
where $\delta^1_\Pi (X) \in \Gamma{\bigwedge^n AG}$ is given by
\begin{align*}
\delta^1_\Pi (X) (\alpha_1, \ldots, \alpha_n) = \sum_{k=1}^{n} (-1)^{n-k} \Pi^{\sharp} (\alpha_1 \wedge \cdots \wedge \hat{\alpha}_k \wedge \cdots \wedge \alpha_n) (X(\alpha_k))
- X([\alpha_1,\ldots, \alpha_n])
\end{align*}
for $\alpha_1, \ldots, \alpha_n \in \Gamma A^*G = \Gamma (TM)^0.$
\end{prop}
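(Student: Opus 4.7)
The strategy is to establish (1) via Proposition \ref{left-invariant} and to prove (2) by pairing both sides at a unit with $n$ sections of $A^*G$ and matching with the claimed formula for $\delta^1_\Pi(X)$.

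For (1), the flow of $\overleftarrow{X}$ is right translation by the bisections $\exp(tX)$; namely $\phi_t = r_{\exp(tX)}$. For $g \in G$ and a bisection $\mathcal{G}$ through $g$, set $h_t = \exp(tX)(\beta(g))$, so that $\phi_t(g) = g \cdot h_t$ and $\exp(tX)$ is a bisection through $h_t$. The affine identity of Theorem \ref{multiplicative}(1) gives
$$\Pi(g \cdot h_t) = (r_{\exp(tX)})_* \Pi(g) + (l_{\mathcal{G}})_* \Pi(h_t) - (r_{\exp(tX)})_* (l_{\mathcal{G}})_* \Pi(\epsilon_{\beta(g)}).$$
Applying $(r_{\exp(-tX)})_* = (\phi_{-t})_*$ and using that left and right translations by bisections commute, the right-translation factors cancel in the first and third terms, leaving
$$(\phi_t^* \Pi)(g) = \Pi(g) + (l_{\mathcal{G}})_*(\phi_t^* \Pi)(\epsilon_{\beta(g)}) - (l_{\mathcal{G}})_* \Pi(\epsilon_{\beta(g)}).$$
Differentiating at $t=0$ yields $\mathcal{L}_{\overleftarrow{X}}\Pi(g) = (l_{\mathcal{G}})_* \mathcal{L}_{\overleftarrow{X}}\Pi(\epsilon_{\beta(g)})$, and Proposition \ref{left-invariant} then concludes that $\mathcal{L}_{\overleftarrow{X}}\Pi$ is left invariant.

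For (2), since $\mathcal{L}_{\overleftarrow{X}}\Pi$ is left invariant, pairing at $\epsilon_m$ with $1$-forms only sees their components in $A_m^*G$, so it suffices to verify the identity after pairing with extensions $\tilde{\alpha}_i$ of sections $\alpha_i \in \Gamma A^*G = \Gamma(TM)^0$. Expanding the left-hand side via the Schouten--Nijenhuis derivation identity
$$[\overleftarrow{X}, \Pi](\tilde{\alpha}_1, \ldots, \tilde{\alpha}_n) = \overleftarrow{X}\bigl(\Pi(\tilde{\alpha}_1, \ldots, \tilde{\alpha}_n)\bigr) - \sum_{k=1}^n \Pi\bigl(\tilde{\alpha}_1, \ldots, \mathcal{L}_{\overleftarrow{X}}\tilde{\alpha}_k, \ldots, \tilde{\alpha}_n\bigr),$$
coisotropicity of $M$ makes $\Pi(\tilde{\alpha}_1,\ldots,\tilde{\alpha}_n)$ vanish on $M$, so the first summand at $\epsilon_m$ pairs $d\Pi(\tilde{\alpha}_1,\ldots,\tilde{\alpha}_n)$ with $X(m)$; by formula \eqref{nambu-bracket} and Proposition \ref{coiso-n-bracket}, this contributes $X([\alpha_1,\ldots,\alpha_n])$ plus correction terms built from $\iota_{\Pi^\sharp(\tilde{\alpha}_1\wedge\cdots\wedge\tilde{\alpha}_n\text{ with }\tilde{\alpha}_k\text{ omitted})}d\tilde{\alpha}_k$. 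Expanding $\mathcal{L}_{\overleftarrow{X}}\tilde{\alpha}_k$ by Cartan's formula in the second summand, the $\iota_{\overleftarrow{X}}d\tilde{\alpha}_k$ pieces cancel those correction terms, while the $d(\overleftarrow{X}\tilde{\alpha}_k)$ pieces --- after using $\overleftarrow{X}(\tilde{\alpha}_k)|_{\epsilon_m} = X(\alpha_k) = \alpha_k(X)$ and the definition of $\Pi^\sharp$ --- reproduce the sum $\sum_{k=1}^n (-1)^{n-k}\Pi^\sharp(\alpha_1\wedge\cdots\wedge\hat{\alpha}_k\wedge\cdots\wedge\alpha_n)(X(\alpha_k))$ appearing in $\delta^1_\Pi(X)$.

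The main obstacle is the sign- and term-bookkeeping in step (2): the Schouten--Nijenhuis derivation rule, Cartan's formula, and the Nambu-form bracket formula \eqref{nambu-bracket} each carry specific sign conventions, and verifying that every correction term cancels in matched pairs and that the residual expression equals $-\delta^1_\Pi(X)$ (with the correct overall sign) requires careful accounting. Extension-independence of the final pairing is, however, automatic from the left invariance established in step (1), since pairing an element of $\bigwedge^n A_mG$ with $1$-forms on $G$ at $\epsilon_m$ depends only on the restrictions to $A_m^*G$.
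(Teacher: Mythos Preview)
Your proposal is correct and follows essentially the same route as the paper: part (1) uses the flow $r_{\exp tX}$ together with the affine identity of Theorem~\ref{multiplicative}(1) and Proposition~\ref{left-invariant}, and part (2) pairs at units with extensions of sections of $(TM)^0$, expands via the derivation identity for $\mathcal{L}_{\overleftarrow{X}}\Pi$, substitutes the Nambu-form bracket formula~\eqref{nambu-bracket}, and uses Cartan's formula so that the $\iota_{\overleftarrow{X}}d\tilde{\alpha}_k$ pieces cancel against the correction terms while the $d\iota_{\overleftarrow{X}}\tilde{\alpha}_k$ pieces survive. The paper carries out exactly this computation, so your only remaining task is the sign bookkeeping you flagged; note also that invoking coisotropicity of $M$ for the first summand is not actually needed --- $\overleftarrow{X}\bigl(\Pi(\tilde{\alpha}_1,\ldots,\tilde{\alpha}_n)\bigr)=\langle d\Pi(\tilde{\alpha}_1,\ldots,\tilde{\alpha}_n),\overleftarrow{X}\rangle$ holds identically.
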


\begin{proof}
Let $\mathcal{X}_t = \text{exp} \hspace*{0.05cm} tX$ be the one-parameter family of bisections generated by $X \in \Gamma AG.$ Let $g \in G$ with $\beta(g) = u$. Let
$u_t = (\text{exp} \hspace*{0.05cm} tX) (u)$ be the integral curve of $\overleftarrow{X}$ starting from $u$. If $\mathcal{G}$ is any (local) bisection through $g$, then from the multiplicativity condition of $\Pi$ (cf. Theorem \ref{multiplicative}), we have
\begin{align*}
 \Pi (  g u_t) =   (r_{\mathcal{X}_t})_* \Pi(g) + (l_{\mathcal{G}})_* \Pi(u_t)-  (r_{\mathcal{X}_t})_* (l_{\mathcal{G}})_* \Pi(u).
\end{align*}
Therefore,
\begin{align*}
(r_{\mathcal{X}^{-1}_t})_* \Pi ( g u_t) - \Pi(g) =  (r_{\mathcal{X}^{-1}_t})_*   (l_{ \mathcal{G}})_* \Pi(u_t) - (l_{\mathcal{G}})_* \Pi (u).
\end{align*}
Taking derivative at $t = 0$, one obtains
\begin{align*}
 (\mathcal{L}_{\overleftarrow{X}} \Pi)(g) = (l_{\mathcal{G}})_* ((\mathcal{L}_{\overleftarrow{X}} \Pi)(u)).
\end{align*}
Therefore, $\mathcal{L}_{\overleftarrow{X}} \Pi $ is left invariant by the Proposition \ref{left-invariant} and hence it corresponds to some $n$-multisection of $AG$. To show that
$\mathcal{L}_{\overleftarrow{X}} \Pi$ corresponds to $- \delta^1_\Pi (X) \in \Gamma {\bigwedge^n AG}$, we have to check that $\mathcal{L}_{\overleftarrow{X}} \Pi$ and $- \overleftarrow{\delta^1_\Pi (X)}$ coincide on the unit space $M$ (both being left invariant). Since both of them are tangent to $\alpha$-fibers, it is enough to show that they coincide on the conormal bundle $(TM)^0$.
Let $\alpha_1, \ldots, \alpha_n$ be any sections of $(TM)^0$ and $\tilde{\alpha}_1, \ldots, \tilde{\alpha}_n$ be their respective extensions to one forms on $G$. Observe that
\begin{align*}
& (\mathcal{L}_{\overleftarrow{X}} \Pi)\big|_M (\alpha_1, \ldots, \alpha_n)\\
 =& \big[\langle \overleftarrow{X}, d (\Pi(\tilde{\alpha}_1,\ldots,\tilde{\alpha}_n)) \rangle - \sum_{k=1}^{n} \Pi(\tilde{\alpha}_1 ,\ldots, \mathcal{L}_{\overleftarrow{X}} \tilde{\alpha}_k,\ldots, \tilde{\alpha}_n)\big]\big|_M\\
 =& \big[\langle \overleftarrow{X}, [\tilde{\alpha}_1,\ldots,\tilde{\alpha}_n] \rangle - \sum_{k=1}^n (-1)^{n-k} \langle \overleftarrow{X}, \iota_{\Pi^{\sharp} (\tilde{\alpha}_1 \wedge\cdots \wedge \hat{\tilde{\alpha}}_k \wedge\cdots \wedge \tilde{\alpha}_n)} d\tilde{\alpha}_k \rangle\\
  & - \sum_{k=1}^{n} (-1)^{n-k} \langle \Pi^{\sharp} (\tilde{\alpha}_1 \wedge\cdots \wedge \hat{\tilde{\alpha}}_k \wedge\cdots \wedge \tilde{\alpha}_n), \mathcal{L}_{\overleftarrow{X}} \tilde{\alpha}_k \rangle \big]\big|_M\\
  & \hspace*{5cm} (\text{from the Equation}~ (\ref{nambu-bracket}))\\
 =& \big[\langle \overleftarrow{X}, [\tilde{\alpha}_1,\ldots,\tilde{\alpha}_n] \rangle - \sum_{k=1}^n (-1)^{n-k} \langle \Pi^{\sharp} (\tilde{\alpha}_1 \wedge\cdots \wedge \hat{\tilde{\alpha}}_k \wedge\cdots \wedge \tilde{\alpha}_n), d \iota_{\overleftarrow{X}} \tilde {\alpha}_k \rangle \big]\big|_M\\
  & \hspace*{5cm} (\text{using Cartan formula})\\
 =& \langle X, [\alpha_1,\ldots,\alpha_n] \rangle - \sum_{k=1}^n (-1)^{n-k} \Pi^{\sharp} ({\alpha}_1 \wedge\cdots \wedge \hat{\alpha}_k \wedge\cdots \wedge {\alpha_n}) (X (\alpha_k))\\
 = & -\delta^1_\Pi (X) (\alpha_1,\ldots, \alpha_n).
\end{align*}
\end{proof}

To make our notation simple, let us denote $\delta^0_\Pi, \delta^1_\Pi$ by the same symbol $\delta_\Pi$. We extend $\delta_\Pi$ to the graded algebra $\Gamma{\bigwedge^{\bullet}A}$ of multisections of $AG$ by the following rule
\begin{align*}
 \delta_\Pi (P \wedge Q) = \delta_\Pi (P) \wedge Q + (-1)^{|P| (n-1)} P \wedge \delta_\Pi (Q)
\end{align*}
for $P \in \Gamma{\bigwedge^{|P|}A}, Q \in \Gamma{\bigwedge^{|Q|}A}$. Then the operator
\begin{align*}
 \delta_\Pi : \Gamma{{\bigwedge}^kAG} \rightarrow \Gamma{{\bigwedge}^{k+n -1} AG}
\end{align*}
satisfies
\begin{align*}
 \delta_\Pi ([P,Q]) = [ \delta_\Pi (P), Q] + (-1)^{(|P|-1)(n-1)} [P, \delta_\Pi (Q)].
\end{align*}
Note that the operator $\delta_\Pi$ need not satisfy condition $\delta_\Pi \circ \delta_\Pi = 0 .$

\medskip

We known that, Lie bialgebroids are infinitesimal form of Poisson groupoids. More precisely, given a Poisson groupoid $G \rightrightarrows M$ with Lie algebroid $AG$, it is known that its dual bundle $A^*G$
also carries a Lie algebroid structure and $(AG, A^*G)$ forms a Lie bialgebroid.  In the next theorem we show that weak Lie-Filippov bialgebroids are infinitesimal form of Nambu-Lie groupoids.




\begin{thm}\label{nambu-grpd-bialgbd}
Let $(G \rightrightarrows M , \Pi) $ be a Nambu-Lie groupoid of order $n$ with Lie algebroid $AG \rightarrow M$. Then $(AG, A^*G)$ forms a weak Lie-Filippov bialgebroid of order $n$ over $M$.
\end{thm}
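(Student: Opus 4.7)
The plan is to verify the five conditions of Definition \ref{lie-fill-defn} for the pair $(AG, A^*G)$, using the identification $A^*G \cong (TM)^0$ as a bundle over $M$. The first ingredient, that $AG$ is a Lie algebroid, is automatic. For the remaining data, I would use the coisotropic restriction machinery from Section 3. Since $\Pi$ is multiplicative, property $(2)$ of Theorem \ref{multiplicative} gives that $M$ is coisotropic in $G$. Applying Proposition \ref{coiso-n-bracket} to the Nambu-Poisson manifold $(G, \Pi)$ and the coisotropic submanifold $M$, one obtains (a) the Nambu-form bracket on $\Omega^1(G)$ restricts to a skew-symmetric $n$-bracket on $\Gamma(TM)^0 = \Gamma A^*G$, and (b) the bundle map $\Pi^\sharp$ sends $\bigwedge^{n-1}(TM)^0$ into $TM$, providing the anchor $\rho := \Pi^\sharp|_{\bigwedge^{n-1}(TM)^0} : \bigwedge^{n-1}A^*G \to TM$, as already noted in Remark \ref{nlg-bracket-anchor}.

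Next I would verify conditions $(2)$, $(3)$, and $(4)$ of Definition \ref{lie-fill-defn} by lifting sections of $A^*G$ to $1$-forms on $G$ defined in a neighborhood of $M$ and applying the corresponding properties of the Nambu-form bracket on $\Omega^1(G)$ (properties $(3)$, $(4)$, $(5)$ listed after Equation \eqref{nambu-bracket}). For condition $(4)$, the Leibniz rule for the restricted bracket follows from property $(3)$ of the Nambu-form bracket together with the fact that $\rho$ agrees with $\Pi^\sharp$ on $\bigwedge^{n-1}(TM)^0$. For $(2)$ and $(3)$, one must show that $d_A$-closed sections of $A^*G$ admit $d$-closed extensions to $1$-forms on $G$ near $M$; this is the place where Lie algebroid cohomology of $AG$ must be matched with de Rham cohomology of $G$ near the unit manifold, and once this matching is in place, the fundamental identity in the restricted form and the anchor compatibility descend directly from the corresponding properties of $\Omega^1(G)$ by restriction to $C = M$, using the argument of Proposition \ref{coiso-n-bracket} that the bracket of extended forms, and $\Pi^\sharp$ applied to wedges of extensions, land back in $(TM)^0$ respectively $TM$.

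The most substantial step is condition $(5)$, the bialgebroid compatibility $d_A[\alpha_1, \ldots, \alpha_n] = \sum_k [\alpha_1, \ldots, d_A\alpha_k, \ldots, \alpha_n]$. My approach is to exploit the operator $\delta_\Pi : \Gamma \bigwedge^k AG \to \Gamma \bigwedge^{k+n-1}AG$ introduced in the paragraph immediately preceding the theorem, together with the derivation identity
\[
\delta_\Pi([P,Q]) = [\delta_\Pi(P), Q] + (-1)^{(|P|-1)(n-1)}[P, \delta_\Pi(Q)]
\]
which encodes multiplicativity of $\Pi$ through the fact that $\mathcal{L}_{\overleftarrow{X}}\Pi = -\overleftarrow{\delta_\Pi(X)}$ is left-invariant. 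Dualizing this derivation property via the pairing between $\Gamma \bigwedge^\bullet AG$ and $\Gamma \bigwedge^\bullet A^*G$, and reading off the defining formula for $\delta_\Pi^1(X)$ in terms of $X$, the $n$-bracket on $\Gamma A^*G$, and $\Pi^\sharp$, converts the derivation identity for $\delta_\Pi$ into exactly the compatibility $(5)$ after identifying $d_A$ on $\Gamma \bigwedge^\bullet A^*G$ with the dual of the Lie bracket on $\Gamma AG$ through the usual Koszul-style formula.

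The main obstacle is precisely this last step: keeping the duality book-keeping straight between the degree-$(n-1)$ operator $\delta_\Pi$ on multisections of $AG$ and the exterior differential $d_A$ on sections of $\bigwedge^\bullet A^*G$, including the graded extension of the $n$-bracket to $\Gamma \bigwedge^\bullet A^*G$ that appears on the right-hand side of $(5)$. A subtlety is that $\delta_\Pi \circ \delta_\Pi$ need not vanish, reflecting the fact that the fundamental identity on $\Gamma A^*G$ only holds in the restricted sense of condition $(2)$; but this is consistent, since the derivation identity for $\delta_\Pi$ dualizes to condition $(5)$ without requiring $\delta_\Pi^2 = 0$, which matches the weakness already built into the definition.
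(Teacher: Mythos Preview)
Your treatment of conditions (1)--(4) of Definition \ref{lie-fill-defn} matches the paper's: both obtain the bracket and anchor on $A^*G \cong (TM)^0$ via Proposition \ref{coiso-n-bracket} and Remark \ref{nlg-bracket-anchor}, and both lift sections of $A^*G$ to $1$-forms on $G$ to inherit the restricted Fundamental identity, anchor compatibility, and Leibniz rule from $\Omega^1(G)$. The paper is more specific in always using \emph{left-invariant} extensions $\tilde\alpha$, for which one has the key identity $d_A\alpha = (d\tilde\alpha)|_M$; this immediately dispatches your concern about lifting $d_A$-closed sections to $d$-closed $1$-forms.

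For condition (5) you take a genuinely different and heavier route. The paper does not use the $\delta_\Pi$ machinery at all here: since $G$ is itself a Nambu-Poisson manifold, $(TG, T^*G)$ is already a weak Lie-Filippov bialgebroid over $G$ by \cite{bas-bas-das-muk}, so the compatibility $d[\tilde\alpha_1,\ldots,\tilde\alpha_n] = \sum_k[\tilde\alpha_1,\ldots,d\tilde\alpha_k,\ldots,\tilde\alpha_n]$ already holds on $\Omega^\bullet(G)$, and the same identity $d_A\alpha = (d\tilde\alpha)|_M$ for left-invariant extensions lets it descend to $\Gamma\bigwedge^\bullet A^*G$ in one line. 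Your proposed route---dualizing the Schouten-derivation property of $\delta_\Pi$ through the pairing---is modelled on the Lie-bialgebroid argument of \cite{mac-xu}, and while plausible, the passage from ``$\delta_\Pi$ is a derivation of the Schouten bracket on $\Gamma\bigwedge^\bullet AG$'' to ``$d_A$ is a derivation of the extended $n$-bracket on $\Gamma\bigwedge^\bullet A^*G$'' is a nontrivial equivalence even for $n=2$, not a formal dualization. What your approach would buy, if carried through, is independence from the black-box result for $(TG, T^*G)$; what the paper's approach buys is that the obstacle you flag simply disappears.
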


\begin{proof}
From the Remark \ref{nlg-bracket-anchor}, we have the space of sections of the bundle $A^*G = (TM)^0 \rightarrow M$ admits a skew-symmetric $n$-bracket
$[~, \ldots, ~]$ and there exists a bundle map $$\rho : {\bigwedge}^{n-1}A^*G \rightarrow TM.$$

Let $\alpha_1, \ldots, \alpha_{n-1} \in \Gamma (TM)^0=\Gamma{(A^*G)} $ with $d_A \alpha_i = 0$, for all $i= 1, \ldots ,n-1.$ Let $\tilde{\alpha}_1, \ldots, \tilde{\alpha}_{n-1}$ be, respectively, their extensions to left invariant $1$-forms on $G$ such that $d \tilde{\alpha}_i = 0$, for all
$i= 1, \ldots ,n-1.$ Then the conditions $(2)$ and $(3)$ of the Definition \ref{lie-fill-defn} of a weak Lie-Filippov algebroid pair follows from the weak Lie-Filippov bialgebroid structure $(TG, T^*G)$ (Note that $G$ is a Nambu-Poisson manifold). 

Let $f \in C^\infty(M).$ Then observe that
\begin{align*}
 [\tilde {\alpha}_1, \ldots, \tilde{\alpha}_{n-1}, (\beta^*f) \tilde{\alpha}_n ] = (\beta^*f) [\tilde{\alpha}_1, \ldots, \tilde{\alpha}_{n-1},  \tilde{\alpha}_n ] + \Pi^{\sharp}(\tilde{\alpha}_1 \wedge \cdots \wedge \tilde{\alpha}_{n-1})(\beta^*f)\tilde{\alpha}_n.
\end{align*}
Since $(\beta^*f) \tilde{\alpha}_n = \widetilde{f \alpha}_n$, by assertion $(2)$ of the Propostion \ref{coiso-n-bracket}, we get
\begin{align*}
[\alpha_1,\ldots, \alpha_{n-1}, f \alpha_n] = f [\alpha_1, \ldots, \alpha_n] + \rho (\alpha_1 \wedge \cdots \wedge \alpha_{n-1}) (f) \alpha_n
\end{align*}
proving condition (4) of the Definition \ref{lie-fill-defn}. Moreover the compatibility condition of the weak Lie-Filippov bialgebroid (condition $(5)$ of the Definition \ref{lie-fill-defn}) follows from the observation that for any $\alpha \in \Gamma{(A^*G)} = \Gamma (TM)^0$ and any left invariant extension $\tilde{\alpha} \in \Omega^1(G)$, we have
\begin{align*}
 d_A \alpha = (d \tilde \alpha)|_M.
\end{align*}
Thus, $(AG, A^*G)$ is a weak Lie-Filippov bialgebroid of order $n$.
\end{proof}

\begin{remark}If $(G, \Pi)$ is a Nambu-Lie group with Lie algebra $\mathfrak{g}$, the dual vector space $\mathfrak{g}^*$
carries a Filippov algebra structure \cite{vais}. Moreover the pair $(\mathfrak{g} , \mathfrak{g}^*)$ forms a (weak) Lie-Filippov bialgebra (\cite{bas-bas-das-muk, vais}).
The Lie-Filippov bialgebra $(\mathfrak{g}, \mathfrak{g}^*)$ is the infinitesimal form of the Nambu-Lie group $(G, \Pi).$ 
A Lie-Filippov bialgebra $(\mathfrak{g}, \mathfrak{g}^*)$ can also be seen a Lie 
algebra $\mathfrak{g}$ together with a Filippov algebra structure on the dual vector space $\mathfrak{g}^*$ such that the map $\delta : \mathfrak{g} \rightarrow \bigwedge^n \mathfrak{g}$ dual
to the Filippov bracket on $\mathfrak{g}^*$, defines a $1$-cocycle of $\mathfrak{g}$ with respect to the adjoint representation on $\bigwedge^n \mathfrak{g}$.
\end{remark}

We have seen that given a Nambu-Lie groupoid of order $n$, there is an induced Nambu-Poisson structure on the base manifold (cf. Proposition \ref{inverse-basenambu}). On the other hand, 
given a weak Lie-Filippov bialgebroid, there is an induced Nambu-Poisson structure on the base (cf. Theorem \ref{wlfb-base-nambu}).
The next proposition compares these Nambu-Poisson structures on the base induced from the Nambu Lie groupoid and its infinitesimal.

\begin{prop}\label{compare-two-NP-structure}
Let $(G \rightrightarrows M , \Pi) $ be a Nambu-Lie groupoid (of order $n$) with associated weak Lie-Filippov bialgebroid $(AG, A^*G).$ Then the induced Nambu structures on $M$
coming from the Nambu-Lie groupoid and the weak Lie-Filippov bialgebroid are related by
\begin{align*}
 \{ ~, \ldots, ~\}_M  = (-1)^{n-1}\{ ~, \ldots, ~\}_{(AG, A^*G)} .
\end{align*}
\end{prop}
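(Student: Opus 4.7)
The plan is to fix $x \in M$ and $f_1, \ldots, f_n \in C^\infty(M)$ and compute both sides at $x$ by reducing each to an evaluation of $\Pi$ at the unit $\epsilon_x$. The multiplicativity of $\Pi$, in the guise of item (4) of Theorem \ref{multiplicative}, will kill almost all the cross terms in the expansion.

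First I would pin down the identification $A^\ast G \cong (TM)^0$. Using the splitting $T_{\epsilon_x} G = T_x M \oplus A_x G$ induced by the unit embedding and the fact that $AG = \ker(d\alpha)|_M$, a section $d_A f \in \Gamma A^\ast G$ corresponds at $\epsilon_x$ to the conormal covector $d(\beta^\ast f) - d(\alpha^\ast f) \in (TM)^0_{\epsilon_x}$. Indeed, on $T_x M$ both differentials equal $df|_x$ (since $\alpha \circ \epsilon = \beta \circ \epsilon = \mathrm{id}_M$) and so the difference vanishes; on $A_x G \subset \ker d\alpha$ only $d(\beta^\ast f)$ contributes and returns $X(\beta^\ast f) = a(X)(f) = \langle d_A f, X \rangle$. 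Dually, $df_n|_x \in T^\ast_x M$ extends to $d(\alpha^\ast f_n)|_{\epsilon_x} \in T^\ast_{\epsilon_x}G$, which annihilates $A_x G$ and therefore faithfully represents $df_n|_x$ against any vector in $T_xM$.

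Using $\rho = \Pi^\sharp|_{\bigwedge^{n-1}(TM)^0}$ from Remark \ref{nlg-bracket-anchor}, the formula of Proposition \ref{wlfb-base-nambu} becomes
\[
\{f_1, \ldots, f_n\}_{(AG, A^\ast G)}(x) = \Pi_{\epsilon_x}\bigl(d(\beta^\ast f_1) - d(\alpha^\ast f_1), \ldots, d(\beta^\ast f_{n-1}) - d(\alpha^\ast f_{n-1}), d(\alpha^\ast f_n)\bigr).
\]
Expanding the wedge into $2^{n-1}$ terms, item (4) of Theorem \ref{multiplicative} forces every term in which $\Pi$ receives both an $\alpha^\ast$-differential and a $\beta^\ast$-differential to vanish. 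Since the last slot already carries $d(\alpha^\ast f_n)$, the only surviving term is the one in which each of the first $n-1$ entries equals $-d(\alpha^\ast f_i)$, contributing a global sign $(-1)^{n-1}$. Combining with the defining identity $\{\alpha^\ast f_1, \ldots, \alpha^\ast f_n\}_G = \alpha^\ast \{f_1,\ldots,f_n\}_M$ from Proposition \ref{inverse-basenambu}, I obtain $\{f_1,\ldots,f_n\}_{(AG, A^\ast G)} = (-1)^{n-1}\{f_1,\ldots,f_n\}_M$, and multiplying both sides by $(-1)^{n-1}$ delivers the stated identity.

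The only real obstacle is to fix the identification $A^\ast G \cong (TM)^0$ and the dual correspondence $df_n|_x \leftrightarrow d(\alpha^\ast f_n)|_{\epsilon_x}$ unambiguously; once those are in place the computation is a short expansion controlled entirely by the $\alpha^\ast/\beta^\ast$-compatibility condition of multiplicativity.
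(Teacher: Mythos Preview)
Your argument is correct. Both your proof and the paper's ultimately rest on the same ingredient, namely the $\alpha^*/\beta^*$-orthogonality in item (4) of Theorem \ref{multiplicative}, but you route the computation through the source map $\alpha$ whereas the paper routes it through the target $\beta$. Concretely, the paper replaces each $d_Af_i$ directly by $d(\beta^*f_i)$ and fills the last slot with $\beta^*f_n$ as well; then the sign $(-1)^{n-1}$ appears in one stroke from the fact that $\beta$ is an anti Nambu--Poisson map (Proposition \ref{inverse-basenambu}). In your version, by contrast, the sign is produced combinatorially by the $(n-1)$ minus signs in $-d(\alpha^*f_i)$ after the cross terms are killed, and the final identification uses that $\alpha$ is a Nambu--Poisson map. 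Your treatment has the advantage of making the identification $d_Af \leftrightarrow d(\beta^*f)-d(\alpha^*f)\in (TM)^0$ fully explicit, which clarifies a step the paper leaves implicit: the vector $\Pi^\sharp\bigl(d(\beta^*f_1)\wedge\cdots\wedge d(\beta^*f_{n-1})\bigr)\big|_M$ lies in $AG$, not in $TM$, so the paper's second line only makes sense once one knows (again by item (4)) that pairing it with $d(\beta^*f_n)$ agrees with pairing the true $TM$-valued anchor image with $df_n$. The paper's version is shorter; yours is more transparent about the bookkeeping.
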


\begin{proof}
For any functions $f_1, \ldots, f_n \in C^\infty(M)$, we have
\begin{align*}
 \{f_1, \ldots, f_n\}_{(AG, A^*G)} =& \Pi^{\sharp}\big|_M (d_A f_1 \wedge \cdots \wedge d_A f_{n-1}) f_n\\
=& \Pi^{\sharp} ( d (\beta^* f_1) \wedge \cdots \wedge d (\beta^* f_{n-1}))\big|_M f_n\\
=& \Pi (\beta^* f_1 , \ldots, \beta^* f_{n-1}, \beta^* f_n)\big|_M\\
=& (-1)^{n-1} \big(  \beta^*\{f_1, \ldots, f_n\} \big) \big|_M\\
=& (-1)^{n-1} \{f_1, \ldots, f_n\}_M.
\end{align*}
\end{proof}
\begin{remark}
 It is known that under some connectedness and simply connectedness assumption, any Lie bialgebra integrates to a Poisson-Lie group \cite{lu-wein}, and any Lie bialgebroid
integrates to a Poisson groupoid \cite{mac-xu2}. These results does not hold in the context of Nambu structures of order $\geqslant 3$. Let $G$ be a connected and simply-connected Lie group with Lie algebra $\mathfrak{g}.$ Given a Lie-Filippov bialgebra structure $(\mathfrak{g}, \mathfrak{g}^*)$ on $\mathfrak{g}$, the $1$-cocycle $\delta : \mathfrak{g} \rightarrow \bigwedge^n \mathfrak{g}$
dual to the Filippov algebra bracket on $\mathfrak{g}^*$ integrates a multiplicative $n$-vector field $\Pi$ on the Lie group. However this $n$-vector field (for $n \geqslant 3$) need not be a Nambu tensor \cite{vais}, that is, need not be locally decomposable. Thus (weak) Lie-Filippov bialgebra
does not integrate to a Nambu-Lie group in general. 
\end{remark}

\section{Coisotropic subgroupoids of a Nambu-Lie groupoid}

In this final section, we introduce the notion of coisotropic subgroupoid of a Nambu-Lie groupoid 
 and study the infinitesimal object corresponding to it.

\begin{defn}
 Let $(G \rightrightarrows M , \Pi)$ be a Nambu-Lie groupoid of order $n$. Then a subgroupoid $H \rightrightarrows N$ is called a {\it coisotropic subgroupoid} if $H$ is a coisotropic submanifold of $G$ with respect to $\Pi$.
\end{defn}

\begin{exam}
\begin{enumerate}
\item For $n=2$, that is, when $G \rightrightarrows M$ is a Poisson groupoid, this notion is same as the coisotropic subgroupoid of a Poisson groupoid introduced in \cite{xu}.
\item Let $(G, \Pi)$ be a Nambu-Lie group. Then a subgroup of $G$ is called coisotropic if it is also a coisotropic submanifold of $G$. Any coisotropic subgroup
of $G$ is a coisotropic subgroupoid over a point.
\item Let $(G \rightrightarrows M , \Pi)$ be a Nambu-Lie groupoid. Then by the Proposition \ref{inverse-basenambu}, there exist an induced Nambu-structure
on $M$ for which the source map $\alpha$ is a Nambu-Poisson map. Let $N \hookrightarrow M$ be a coisotropic submanifold of $M$ with respect to this induced Nambu
structure. Consider the restriction $G\big|_N := \alpha^{-1}(N) \cap \beta^{-1}(N)$, then $G\big|_N \rightrightarrows N$ is a coisotropic subgroupoid.
\item Let $G \rightrightarrows M$ be a Nambu-Lie groupoid. If the set of all elements of $G$ which has same source and target, is a submanifold of $G$, then it is a coisotropic subgroupoid.
\end{enumerate}
\end{exam}

Note that, the infinitesimal object corresponding to a Nambu-Lie groupoid $(G\rightrightarrows M , \Pi)$ is the weak Lie-Filippov bialgebroid
$(AG, A^*G)$. Therefore it is natural to ask how the Lie algebroid of a coisotropic subgroupoid $H \rightrightarrows N$ is related to the weak Lie-Filippov bialgebroid $(AG, A^*G)$. To answer this question, we  introduce a notion of {\it coisotropic subalgebroid} of a weak Lie-Filippov bialgebroid and show that infinitesimal forms of  coisotropic subgroupoids of a Nambu-Lie groupoid $(G \rightrightarrows M, \Pi)$ appear as coisotropic subalgebroids of the corresponding weak Lie-Filippov bialgebroid $(AG, A^*G)$.

\begin{defn}\label{coiso-subalg}
 Let $(A, A^*)$ be a weak Lie-Filippov bialgebroid of order $n$ over $M$. Then a Lie subalgebroid $B \rightarrow N$ of $A\rightarrow M$ is
called a {\it coisotropic subalgebroid} if the anchor $\rho: \bigwedge^{n-1}A^* \rightarrow TM$ and the $n$-bracket $[~, \ldots, ~]$ on $\Gamma{A^*}$
satisfy the following properties.
\begin{enumerate}
\item The anchor $\rho$ maps $\bigwedge^{n-1}B^{0} \rightarrow TN$.
\item If $\alpha_1, \ldots, \alpha_n \in \Gamma{A^*}$ with ${\alpha_i}\big|_N \in B^{0}$ for all $i$,  then $[\alpha_1, \ldots, \alpha_n]\big|_N \in B^{0}$.
\item If $\alpha_1, \ldots, \alpha_n \in \Gamma{A^*}$ with ${\alpha_i}\big|_N \in B^{0}$ for all $i$ and $\alpha_n\big|_N = 0$, then $[\alpha_1, \ldots, \alpha_n]\big|_N = 0,$
where $B_x^{0} = \{\gamma \in A_x^* | \gamma (v) = 0, \forall v \in B_x\}$, is the annihilator of $B_x$, $x \in N$.
\end{enumerate}
\end{defn}

\begin{exam}
 Let $M$ be a Nambu-Poisson manifold, then $(TM, T^*M)$ is a weak Lie-Filippov bialgebroid over $M$. Let $N \hookrightarrow M$ be a coisotropic submanifold. Then from the Proposition \ref{coiso-n-bracket}, it follows that the tangent bundle $TN \rightarrow N$ is a coisotropic subalgebroid. 
\end{exam}

It is known that (Proposition \ref{wlfb-base-nambu}, see also \cite{bas-bas-das-muk}), the base of a weak Lie-Filippov bialgebroid carries a Nambu structure.
The next Proposition shows that the base of a coisotropic subalgebroid is a coisotropic submanifold with respect to this induced Nambu structure.

\begin{prop}
 Let $(A, A^*)$ be a weak Lie-Filippov bialgebroid over $M$ and $B \rightarrow N$ be a coisotropic subalgebroid. Then $N$ is a coisotropic submanifold of $M$.
\end{prop}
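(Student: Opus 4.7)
The plan is to reduce the coisotropy of $N$ in $M$ to properties already encoded in the definition of a coisotropic subalgebroid, using the explicit formula for the induced Nambu bracket given in Proposition \ref{wlfb-base-nambu}. To do this, I would take $f_1,\ldots,f_n\in C^\infty(M)$ all vanishing on $N$ and show directly that $\{f_1,\ldots,f_n\}_{(A,A^*)}$ vanishes on $N$.

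The first step is to observe that if $f\in C^\infty(M)$ vanishes on $N$, then $(d_A f)\big|_N$ is a section of the annihilator $B^0$. Indeed, if $a:A\rightarrow TM$ denotes the anchor of the Lie algebroid $A$, then for any $x\in N$ and any $X\in B_x$ we have $\langle d_A f,X\rangle(x) = a(X)f$; since $B\rightarrow N$ is a Lie subalgebroid of $A$, $a(X)\in T_xN$, and since $f\big|_N\equiv 0$, this derivative is zero. Hence $(d_A f_i)\big|_N \in \Gamma B^0$ for each $i=1,\ldots,n$.

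Next I would invoke condition (1) of Definition \ref{coiso-subalg}: the anchor $\rho$ sends $\bigwedge^{n-1}B^0$ into $TN$. Applied to $(d_A f_1)\big|_N \wedge\cdots\wedge(d_A f_{n-1})\big|_N$, this gives that $\rho(d_A f_1\wedge\cdots\wedge d_A f_{n-1})(x)\in T_xN$ for every $x\in N$. Combining this with the fact that $f_n$ vanishes on $N$, the directional derivative of $f_n$ along this tangent vector is zero, so
\begin{align*}
\{f_1,\ldots,f_n\}_{(A,A^*)}(x) \;=\; \rho(d_A f_1\wedge\cdots\wedge d_A f_{n-1})(f_n)(x) \;=\; 0
\end{align*}
for all $x\in N$. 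Since $f_1,\ldots,f_n$ were arbitrary elements of the vanishing ideal $\mathcal{I}(N)$, this shows $\mathcal{I}(N)$ is closed under the Nambu bracket on $M$, which is exactly the coisotropy of $N$.

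I do not expect a genuine obstacle here: only property (1) of Definition \ref{coiso-subalg} is needed, together with the compatibility between $d_A$ and the Lie subalgebroid structure on $B$. Properties (2) and (3) of coisotropic subalgebroids (governing the $n$-bracket on $\Gamma A^*$) do not intervene in this basic consequence; they are instead the conditions that will later match the coisotropic subgroupoid structure on the global side. The one point to be careful about is that $d_A f$ for $f\in C^\infty(M)$ really is a section of $A^*$ (not just a pointwise covector), so that $(d_A f)\big|_N$ makes sense as a section of $A^*\big|_N$ and can be evaluated against elements of $B^0$; this follows at once from the standard construction of $d_A$ from the anchor of $A$.
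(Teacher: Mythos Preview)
Your proof is correct and follows essentially the same route as the paper. The paper works directly with the bundle map $\Pi^\sharp_{(A,A^*)} = \rho \circ \bigwedge^{n-1} a^*$ and shows $a^*(TN)^0 \subseteq B^0$ (the dual statement to your observation that $(d_A f)\big|_N \in B^0$ for $f\in\mathcal{I}(N)$, since $d_A f = a^*(df)$); both arguments then invoke condition (1) of Definition \ref{coiso-subalg} and nothing more. One small caveat: your final step appeals to the equivalence ``$\mathcal{I}(N)$ is a Nambu subalgebra $\Rightarrow$ $N$ is coisotropic'', which in the paper (Proposition 3.2) is stated under the hypothesis that the submanifold is closed and embedded, whereas the paper's own proof of the present proposition avoids this by working directly with covectors in $(TN)^0$; this is easily patched by localizing, but it is worth being aware of.
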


\begin{proof}
Let $a : A \rightarrow TM$ denote the anchor of the Lie algebroid $A$ and $\rho : \bigwedge^{n-1}A^* \rightarrow TM$ be the anchor of pair $(A, A^*).$
We first show that, $a^* (TN)^0 \subseteq B^{0}$. This is true because, $ \langle a^*\xi_x, v \rangle = \langle \xi_x, a(v)\rangle = 0$
for $\xi_x \in (TN)_x^0$ and $v \in B_x$.

Let $\Pi_{(A, A^*)}$ be the induced Nambu structure on $M$ coming from the weak Lie-Filippov bialgebroid $(A, A^*).$ Then the induced map
$\Pi^{\sharp}_{(A, A^*)} : \bigwedge^{n-1}T^*M \rightarrow TM$ is given by $\Pi^{\sharp}_{(A, A^*)} = \rho \circ \bigwedge^{n-1}a^*$. Therefore, for any
$\xi_1, \ldots, \xi_{n-1} \in (TN)^0$, we have
$$ \Pi^{\sharp}_{(A, A^*)} (\xi_1, \ldots, \xi_{n-1}) = \rho (a^*\xi_1, \ldots, a^*\xi_{n-1}) \in TN $$
as $a^*\xi_i \in B^{0}$ and $B$ is a coisotropic subalgebroid. Therefore $N$ is a coisotropic submanifold of $M$.
\end{proof}

The next proposition shows that the infinitesimal object corresponding to coisotropic subgroupoids are coisotropic subalgebroids.

\begin{prop}\label{coiso-subgrpd-coiso-subalgbd}
Let $(G \rightrightarrows M, \Pi)$ be a Nambu-Lie groupoid with weak Lie-Filippov bialgebroid $(AG, A^*G)$. Let $H \rightrightarrows N$
be a coisotropic subgroupoid of $G \rightrightarrows M$ with Lie algebroid $AH \rightarrow N$. Then $AH \rightarrow N$ is a coisotropic subalgebroid.
\end{prop}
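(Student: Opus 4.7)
The plan is to verify, in turn, each of the three conditions of Definition \ref{coiso-subalg} for $B = AH \subset AG$, after noting that $AH$ is a Lie subalgebroid of $AG$ (standard for Lie subgroupoids of Lie groupoids). The key preliminary observation I would record is an identification inside $T^*G|_N$: any element of $(AH)^0$ annihilates both $T_xM$ (since it lies in $A^*G = (TM)^0$) and $AH_x$ (by definition of the annihilator in $A^*G$), hence annihilates $T_xM + AH_x \supseteq T_xN + AH_x = T_xH$. Thus $(AH)^0 \subseteq (TM)^0|_N \cap (TH)^0|_N$. Moreover, because $AH_x$ sits inside $A_xG$, which is complementary to $T_xM$ in $T_xG$, one has $TM|_N \cap TH|_N = TN$.

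For condition (1), I would exploit the fact that both $M$ (by Theorem \ref{multiplicative}(2)) and $H$ (by hypothesis) are coisotropic submanifolds of $G$ with respect to $\Pi$. Given $\alpha_1, \ldots, \alpha_{n-1} \in (AH)^0$, the covectors lie in both $(TM)^0$ and $(TH)^0$, so $\Pi^\sharp(\alpha_1 \wedge \cdots \wedge \alpha_{n-1})$ lies in both $TM$ and $TH$, hence in $TN$. This shows that $\rho = \Pi^\sharp|_{\bigwedge^{n-1}A^*G}$ maps $\bigwedge^{n-1}(AH)^0$ into $TN$.

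For condition (2), the crux is to produce, for each $\alpha_i \in \Gamma A^*G$ with $\alpha_i|_N \in (AH)^0$, a $1$-form $\hat\alpha_i \in \Omega^1(G)$ satisfying simultaneously $\hat\alpha_i|_M = \alpha_i$ and $\hat\alpha_i|_H \in \Gamma(TH)^0$. I would first extend $\alpha_i|_N$ to a section of $(TH)^0 \to H$, which is possible because $(AH)^0$ is a subbundle of $(TH)^0|_N$; then glue this with $\alpha_i$ on $M$ via a partition of unity subordinate to a cover adapted to $M$ and $H$, obtaining $\hat\alpha_i$ on $G$ with the required restrictions. Once this is done, Proposition \ref{coiso-n-bracket} applied to $M \subset G$ coisotropic gives $[\hat\alpha_1, \ldots, \hat\alpha_n]|_M \in \Gamma A^*G$, a form that by construction agrees with $[\alpha_1, \ldots, \alpha_n]$ in the weak Lie-Filippov bialgebroid; the same proposition applied to $H \subset G$ coisotropic gives $[\hat\alpha_1, \ldots, \hat\alpha_n]|_H \in \Gamma(TH)^0$. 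Restricting to $N$, the common value lies in $A^*G|_N \cap (TH)^0|_N = (AH)^0$, as required.

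For condition (3), assume in addition that $\alpha_n|_N = 0$. I would write $\alpha_n$ locally as $\sum_j g_j e_j$ with $g_j \in C^\infty(M)$ vanishing on $N$ and $e_j \in \Gamma A^*G$, and apply the Leibniz rule (condition (4) of Definition \ref{lie-fill-defn}, established for $(AG,A^*G)$ in Theorem \ref{nambu-grpd-bialgbd}):
$$[\alpha_1, \ldots, \alpha_{n-1}, \alpha_n] = \sum_j \bigl( g_j \, [\alpha_1, \ldots, \alpha_{n-1}, e_j] + \rho(\alpha_1 \wedge \cdots \wedge \alpha_{n-1})(g_j)\, e_j \bigr).$$
Restricting to $N$, the first sum vanishes because $g_j|_N = 0$. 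For the second, by the already-verified condition (1), $\rho(\alpha_1 \wedge \cdots \wedge \alpha_{n-1})$ is tangent to $N$, and the derivative of a function vanishing on $N$ in a direction tangent to $N$ is zero on $N$; hence the second sum vanishes on $N$ as well. The main obstacle I foresee is the extension step in condition (2): producing a single $1$-form $\hat\alpha_i$ on $G$ that restricts to $\alpha_i$ on $M$ and to a section of $(TH)^0$ on $H$ simultaneously. The partition-of-unity construction is standard, but depends crucially on the compatibility $\alpha_i|_N \in (TM)^0|_N \cap (TH)^0|_N$, which is exactly what the hypothesis $\alpha_i|_N \in (AH)^0$ encodes.
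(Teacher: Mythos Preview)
Your argument is correct and mirrors the paper's proof closely. The paper handles conditions (1) and (2) exactly as you do---noting $(AH)^0_x = (TM)^0_x \cap (TH)^0_x$ and $T_xN = T_xM \cap T_xH$, then invoking coisotropy of both $M$ and $H$ in $G$ together with Proposition \ref{coiso-n-bracket}---though it simply asserts the existence of extensions $\tilde\alpha_i \in \Omega^1(G)$ conormal to $H$ without the partition-of-unity construction you spell out. For condition (3) the paper says only ``verification is similar,'' presumably meaning another extension argument (choose $\tilde\alpha_n$ vanishing on all of $H$ and use independence of extension from Proposition \ref{coiso-n-bracket}); your use of the Leibniz rule from Definition \ref{lie-fill-defn}(4) is a different and somewhat cleaner route, since it works entirely inside the bialgebroid $(AG,A^*G)$ and avoids any extension to $G$.
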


\begin{proof}
 Since $H \rightrightarrows N$ is a Lie subgroupoid of $G \rightrightarrows M$, therefore $AH \rightarrow N$ is a Lie subalgebroid of
$AG \rightarrow M$. We claim that the anchor $\rho = \Pi^{\sharp}\big|_{\bigwedge^{n-1}(TM)^0} = \Pi^{\sharp}\big|_{\bigwedge^{n-1} (A^*G)}$ of the weak
Lie-Filippov bialgebroid $(AG, A^*G)$ maps $\bigwedge^{n-1}(AH)^0$ to $TN$. First observe that, for any $x \in N$, $(AH)_x^0 = (TM)_x^0 \cap (TH)_x^0$
and $T_xN = T_xM \cap T_xH$. Therefore, 
$\rho$ maps $\bigwedge^{n-1} (AH)^0$ to 
$$\Pi^{\sharp}({\bigwedge}^{n-1} (TM)^0) \cap \Pi^{\sharp}({\bigwedge}^{n-1} (TH)^0) \subseteq TM \cap TH \cong TN,$$ 
here we have used the fact that $M$ and $H$ are both coisotropic submanifolds of $G$.

Let $\alpha_1, \ldots, \alpha_n \in \Gamma{A^*G} = \Gamma{(TM)^0}$ such that $\alpha_i\big|_N \in (AH)^0$, for all $i=1, \ldots, n$. Let
$\tilde{\alpha}_1, \ldots, \tilde{\alpha}_n$ be one forms on $G$ extending $\alpha_1, \ldots, \alpha_n$ and are conormal to $H$. Then
by the Proposition \ref{coiso-n-bracket}, the $1$-form $[\tilde{\alpha_1}, \ldots, \tilde{\alpha_n}]$ is conormal to both $M$ and $H$, as $M$
and $H$ are both coisotropic submanifolds of $G$. Therefore, 
$$[\tilde{\alpha_1}, \ldots, \tilde{\alpha_n}]\big|_N \in (TM)^0 \cap (TH)^0 \cong (AH)^0.$$ 
Verification of the last condition of the Definition \ref{coiso-subalg} is similar. Hence $AH \rightarrow N$ is a coisotropic subalgebroid of $(AG, A^*G).$
\end{proof}

\begin{corollary}
 Let $(G \rightrightarrows M , \Pi)$ be a Nambu-Lie groupoid and $H \rightrightarrows N$ be a coisotropic subgroupoid. Then $N$ is a coisotropic submanifold of $M$.
\end{corollary}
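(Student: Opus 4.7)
The plan is to chain together the last three results of the section. By Proposition~\ref{coiso-subgrpd-coiso-subalgbd}, the Lie algebroid $AH \to N$ of the coisotropic subgroupoid $H \rightrightarrows N$ is a coisotropic subalgebroid of the weak Lie-Filippov bialgebroid $(AG, A^*G)$ associated with the Nambu-Lie groupoid. So the first step is simply to invoke that proposition.

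Next, I would apply the proposition stated immediately before Proposition~\ref{coiso-subgrpd-coiso-subalgbd} (that the base of a coisotropic subalgebroid is a coisotropic submanifold with respect to the Nambu structure induced by the bialgebroid). This gives that $N$ is a coisotropic submanifold of $M$ with respect to the Nambu-Poisson structure $\Pi_{(AG, A^*G)}$.

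Finally, I would invoke Proposition~\ref{compare-two-NP-structure}, which says that the Nambu structure $\Pi_M$ on $M$ induced directly from the Nambu-Lie groupoid (via Proposition~\ref{inverse-basenambu}) and the Nambu structure $\Pi_{(AG, A^*G)}$ induced from the bialgebroid differ only by the sign $(-1)^{n-1}$. Since the coisotropic condition $\Pi^\sharp(\bigwedge^{n-1}(TN)^0) \subseteq TN$ is manifestly insensitive to rescaling $\Pi$ by a nonzero scalar, coisotropicity with respect to $\Pi_{(AG, A^*G)}$ is equivalent to coisotropicity with respect to $\Pi_M$. Hence $N$ is a coisotropic submanifold of $(M, \Pi_M)$.

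There is no real obstacle here, since all the substantive work has already been carried out in Propositions~\ref{coiso-subgrpd-coiso-subalgbd}, \ref{compare-two-NP-structure}, and the intermediate proposition about bases of coisotropic subalgebroids; the corollary just packages them together. The only tiny point requiring attention is observing that coisotropicity is preserved under multiplication of the multivector field by $(-1)^{n-1}$, which is immediate from the definition.
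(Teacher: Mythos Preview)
Your proposal is correct and matches the paper's intended argument: the corollary is stated without proof, but it is clearly meant to follow immediately by combining Proposition~\ref{coiso-subgrpd-coiso-subalgbd} with the preceding proposition on bases of coisotropic subalgebroids, exactly as you outline. Your extra remark that coisotropicity is insensitive to the sign $(-1)^{n-1}$ relating $\Pi_M$ and $\Pi_{(AG,A^*G)}$ (via Proposition~\ref{compare-two-NP-structure}) makes explicit a point the paper leaves implicit.
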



\providecommand{\bysame}{\leavevmode\hbox to3em{\hrulefill}\thinspace}
\providecommand{\MR}{\relax\ifhmode\unskip\space\fi MR }
\providecommand{\MRhref}[2]{%
  \href{http://www.ams.org/mathscinet-getitem?mr=#1}{#2}
}
\providecommand{\href}[2]{#2}

\end{document}